\UseRawInputEncoding 
\documentclass[oneside, 11pt]{amsart}
\usepackage{amsmath}
\usepackage{amssymb}
\usepackage{color}
\usepackage[usenames,dvipsnames,x11names,svgnames]{xcolor}
\usepackage[colorlinks=true,urlcolor=blue, linkcolor=NavyBlue,citecolor=DarkGreen]{hyperref}
\usepackage{amsthm}
\usepackage{amscd}
\usepackage{geometry}
\usepackage{mathrsfs}
\usepackage{dsfont}
 \usepackage{enumerate}

\newtheorem{thm}{Theorem}

\newtheorem{prop}{Proposition}
\newtheorem{lem}[prop]{Lemma}
\newtheorem{cor}[prop]{Corollary}

\theoremstyle{definition}

\newtheorem*{rem}{Remark}

\newcommand{\mb}{\mathbb}
\newcommand{\mc}{\mathcal}
\newcommand{\mf}{\mathfrak}
\newcommand{\ol}{\overline}

\newcommand{\leqs}{\leqslant }
\newcommand{\geqs}{\geqslant }
\newcommand{\wh}{\widehat}

\newcommand{\be}{\begin{equation*}}
\newcommand{\ee}{\end{equation*} }

\newcommand{\ben}{\begin{equation}}
\newcommand{\een}{\end{equation} }

\newcommand{\bs}{\begin{split}}
\newcommand{\es}{\end{split}}

\newcommand{\bmu}{\begin{multline*}}
\newcommand{\emu}{\end{multline*}}

\newcommand{\bmun}{\begin{multline}}
\newcommand{\emun}{\end{multline}}



\begin{document}
\title{Conditional mean values of long Dirichlet polynomials}
\author[W. Heap]{Winston Heap}
\address{Department of Mathematics, Shandong University, Jinan, Shandong 250100, China}
\email{winstonheap@gmail.com}
\maketitle 

\begin{abstract} Conditionally on the Riemann hypothesis we prove asymptotic formulae for mean values of various long Dirichlet polynomials involving the von Mangoldt function. Our results  avoid the use of correlation sum estimates although in addition to the Riemann hypothesis we must assume that our Dirichlet polynomials have weights from a specific class whose transforms are sufficiently concentrated near the origin. We also give large deviation estimates for these long Dirichlet polynomials. 
\end{abstract}

\section{Introduction}

Mean values of Dirichlet polynomials play a central role in analytic number theory. A fundamental formula, in a sharp form due to Montgomery--Vaughan \cite{MV}, states that for general coefficients $a(n)\in\mb{C}$,
\[
\int_T^{2T}\Big|\sum_{n\leqs X}a(n)n^{-it}\Big|^2dt=(T+O(X))\sum_{n\leqs X}|a(n)|^2.
\]
Thus, when $X=o(T)$ we acquire an asymptotic formula which can be viewed as an approximate form of Parseval's identity. However, when $X\gg T$ the resultant upper bound 
 may be far from the truth. Sums of length $X\gg T$ are typically referred to as long Dirichlet polynomials and the computation of their mean values feature throughout analytic number theory. In order to understand these it is typically required to have a good knowledge of the correlation sums 
\[
\sum_{n\leqs x}a(n)\overline{a(n+h)}
\] 
for a given $h$, which can be especially challenging if $a(n)$ has some multiplicative structure. 


Of particular interest to problems in prime number theory and the value distribution of the Riemann zeta function are the Dirichlet polynomials with coefficients given by the von Mangoldt function $\Lambda(n)$. Here, the above correlation sums are the subject of a specific case of the famous Hardy--Littlewood $k$-tuples conjecture stating that 
\begin{equation}\label{HL}
\sum_{n\leqs x}\Lambda(n)\Lambda(n+h)\sim \mf{G}(h)x
\end{equation}
for an explicitly given constant $\mf{G}(h)$. 
 Aside from being a deep result in it's own right, it can be shown \cite{Bolanz, GG} that in a strong form this would imply Montgomery's strong pair correlation conjecture (formula \eqref{mont conj} below) which would have further profound consequences for the distribution of primes and zeros of the Riemann zeta function \cite{GM,GG log der, Mont}.  
This seems currently out of reach, however there are some interesting results regarding \eqref{HL} on average \cite{E, MRT1, Mikawa}.

With strong error terms, formula \eqref{HL} and its generalisations to the $k$-fold convolution of the von Mangoldt function $\Lambda^{(k)}=\Lambda*\cdots *\Lambda$ would also allow the computation of the moments 
\begin{equation}\label{vM MV}
\int_T^{2T} \bigg|\sum_{n\leqs X}\frac{\Lambda(n)}{n^{1/2+it}\log n}\bigg|^{2k}dt
\end{equation}
beyond the ``short" range of $X \leqs  T^{1/k}$. From the work of Selberg on his central limit theorem \cite{Sel, Tsang thesis} we know that this polynomial provides a good approximation to $\log\zeta(\tfrac12+it)$ on average and plays a key role in its value distribution. This was further exemplified in the work of Soundararajan \cite{Sound} who gave Gaussian bounds for the large deviations of $\log|\zeta(\tfrac12+it)|$ and as a result proved the near sharp bounds 
\[
\int_T^{2T}|\zeta(\tfrac12+it)|^{2k}dt\ll T(\log T)^{k^2+\epsilon} 
\]
which was later refined by Harper \cite{Ha}. These last two papers, along with the work of Radziwi\l\l--Soundararajan \cite{RS1}, expanded on this idea and introduced new techniques. A great deal of applications ensued including  a new understanding of short interval maxima of the Riemann zeta function \cite{ABBRS, ABR, AOR}, value distribution of $L$-functions \cite{BELP, Das, HW,RSsel} and unconditional bounds for moments of zeta and $L$-functions \cite{Gao, HRS, HS} amongst others \cite{DFL,HR,LR}. Thus, through these techniques it seems a great deal can be extracted from just short Dirichlet polynomials although one barrier to further progress is naturally the issue of longer sums.

In this paper we are interested in this problem and, in particular, computing the mean values \eqref{vM MV} for $X$ beyond the range of $X\leqs T^{1/k}$. Our arguments do not make use of correlation sum estimates but instead assume the Riemann Hypothesis (RH) and crucially require the Dirichlet polynomial to be weighted. The weights can be taken from a reasonably wide class of functions, although they must have fairly specific properties and, in particular, have transforms that are sufficiently concentrated near the origin. The prototypical example we keep in mind is the pair
\begin{equation}\label{proto weight}
V_{X}(x)=\mathds{1}_{|x|\leqs \log X}\Big(1- \tfrac{|x|}{\log X}\Big),\qquad \widehat{V}_X(z)=\frac{\sin^2(\tfrac12 z\log X)}{(\tfrac12 z)^2\log X}
\end{equation} 
where here and throughout
\vspace{0.4cm}
\[
\widehat{V}_X(z)=\int_{-\infty}^\infty e^{ixz}V_X(x)dx
\]
\vspace{0.7cm}
denotes the Fourier transform. Before stating our results we describe this class of weights. 

We  consider functions with the following properties:

\begin{enumerate}[(i)]
\item \label{V cond} $V_X:\mathbb{R}\to\mathbb{R}$ is a bounded, even function with support in $[-\log X,\log X]$. 
\item \label{phi cond} There exists an $m\geqs 2$ such that for fixed $\Re(z)$ we have
\[
\widehat{V}_X(z/i)\ll \frac{X^{\Re(z)}}{|\Im(z)|^m}. 
\]
as $\Im(z)\to\infty$.
\item \label{f cond} For all $y\in\mb{R}$ we have 
\[
\frac{1}{\log X}\widehat{V}(\tfrac{y}{\log X})\ll \frac{1}{1+y^2}.
\]
\end{enumerate} 
The condition (\ref{phi cond}) allows for absolutely convergent contour integrals, and is perhaps not so crucial, whereas condition (\ref{f cond}) means that these integrals are sufficiently concentrated near the origin and seems harder to dispense with. For long Dirichlet polynomials with weights from this class we prove the following.

\begin{thm}\label{moments thm}Assume RH and suppose $V_X:\mathbb{R}\to\mathbb{R}$ satisfies conditions (\ref{V cond})--(\ref{f cond}). Then for $X\leqs T^{2m}$, $k\in\mb{N}$ and fixed  $0<\theta<1/100$  we have 
\begin{equation*}
\int_T^{2T} \bigg|\sum_{n\leqs X}\frac{\Lambda(n)V_X(\log n)}{n^{1/2+it}\log n}\bigg|^{2k}dt
=
k!\,T\bigg(\sum_{p\leqs T^{\theta/k}}\frac{V_X(\log p)^2}{p}\bigg)^k+O(T(Ck)^{4k}(\log\log T)^{k-1/2}) 
\end{equation*}
for some positive constant $C$.
For $k\in\tfrac12\mb{N}$ we have
\begin{equation*}
\int_T^{2T} \bigg(\mathfrak{F}\sum_{n\leqs X}\frac{\Lambda(n)V_X(\log n)}{n^{1/2+it}\log n}\bigg)^{2k}dt
=
c_k\,T\bigg(\sum_{p\leqs T^{\theta/k}}\frac{V_X(\log p)^2}{p}\bigg)^k+O(T(C^\prime k)^{4k}(\log\log T)^{k-1/2}) 
\end{equation*}
for some positive constant $C^\prime$ where $\mathfrak{F}$ denotes either the real or imaginary part and 
\[
c_k=
\begin{cases}
\frac{(2k)!}{2^{2k}k!} & 2k \text{ is even, }\\
0 & 2k \text{ is odd. }
\end{cases}
\]
\end{thm}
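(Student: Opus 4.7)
The plan is to follow the Radziwi\l\l--Soundararajan decomposition strategy. Write
\[
P(t):=\sum_{n\leqs X}\frac{\Lambda(n)V_X(\log n)}{n^{1/2+it}\log n}= S(t)+L(t)+R(t),
\]
where $S(t)=\sum_{p\leqs T^{\theta/k}}V_X(\log p)p^{-1/2-it}$ is a short prime sum, $L(t)=\sum_{T^{\theta/k}<p\leqs X}V_X(\log p)p^{-1/2-it}$ is a long prime sum, and $R(t)$ bundles the prime powers $p^a$ with $a\geqs 2$. Since $V_X$ is bounded and $\sum_{a\geqs 2}\sum_p p^{-a/2}/a$ converges, $R(t)=O(1)$ uniformly in $t$.

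The short part yields the main term via a standard diagonal computation. Expanding $|S|^{2k}=S^{k}\overline{S}^{k}$, the effective Dirichlet polynomial has length $T^{2\theta}=o(T)$, so Montgomery--Vaughan kills off-diagonals. Unique factorization then forces the two $k$-tuples of primes to match as multisets, producing exactly $k!$ pairings per multiset, and hence the main term $k!\,T\bigl(\sum_{p\leqs T^{\theta/k}}V_X(\log p)^2/p\bigr)^{k}$. A similar unique-factorization argument handles the $(\mathfrak{F}P)^{2k}$ case: expanding $(\mathfrak{F}P)^{2k}=2^{-2k}\sum_{j}\binom{2k}{j}(\pm 1)^{j}P^{\,2k-j}\overline P^{\,j}$, the short-part diagonals collapse to the Gaussian central-moment constant $c_k$, which vanishes when $2k$ is odd by the symmetry $P\mapsto\overline P$.

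The heart of the argument, and the main obstacle, is the estimate
\[
\int_T^{2T}|L(t)|^{2k}\,dt\ll T(Ck)^{4k}(\log\log T)^{k-1/2},
\]
which is sufficient (together with trivial bounds on $R$) to absorb every cross term arising from $|S+L+R|^{2k}$ via H\"older's inequality. The strategy is to exploit the Mellin/Fourier structure of $V_X$ to represent $L(t)$ as a contour integral of the shape
\[
\frac{1}{2\pi i}\int_{(c)}\Big(-\frac{\zeta^\prime}{\zeta}\Big)(s+\tfrac12+it)\,K(s)\,ds,
\]
where the kernel $K$ is built from $\widehat{V}_X$ together with a smooth Perron-type cutoff restricting the prime support to $(T^{\theta/k},X]$. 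Condition (\ref{phi cond}) guarantees sufficient vertical decay to shift the contour just to the right of $\Re s=0$; on RH one then has Selberg/Soundararajan-style moment bounds for $\zeta^\prime/\zeta$ close to the critical line. Condition (\ref{f cond}) is what turns these into a sharp estimate: it forces $K(s)$ to be concentrated in $|\Im s|\ll\log X/\log\log T$, so that when $L$ is raised to the $2k$-th power the relevant primes are effectively those near the lower cutoff, producing the critical $(\log\log T)^{-1/2}$ saving against the main term's $(\log\log T)^{k}$. The combinatorial factor $(Ck)^{4k}$ is the standard cost of converting pointwise bounds into uniform moment estimates, and condition (\ref{V cond}) plays the housekeeping role of forcing absolute convergence in these manipulations.

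The main obstacle is this last step: producing a moment bound for a genuinely long Dirichlet polynomial with no input from correlation sums like \eqref{HL}. Everything else — the short-part diagonal, the $c_k$ combinatorics for the real or imaginary part, the absorption of $R$, and the H\"older bookkeeping of the cross terms — is standard once the long-part bound is in hand.
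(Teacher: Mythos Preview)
Your decomposition $P=S+L+R$ is natural, but the step you flag as the heart of the argument is where the proposal breaks down. You propose to represent the long piece $L(t)=\sum_{T^{\theta/k}<p\leqs X}V_X(\log p)p^{-1/2-it}$ as a contour integral with a kernel ``built from $\widehat V_X$ together with a smooth Perron-type cutoff restricting the prime support to $(T^{\theta/k},X]$''. The trouble is that any such lower cutoff destroys precisely the concentration property~(\ref{f cond}) that you later invoke: a Perron-type factor $Y^s/s$ (smooth or not) with $Y=T^{\theta/k}$ does not decay like $1/(1+y^2)$ on horizontal lines, so the truncation of the $y$-integral to $|y|\leqs 1$ with $O(1)$ error is lost, and with it your $(\log\log T)^{-1/2}$ saving. (Also, since the coefficients are $\Lambda(n)/\log n$, the relevant function is $\log\zeta$, not $\zeta'/\zeta$.) As stated, the long-part moment bound is a hope, not an argument.

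The paper sidesteps this entirely by never isolating $L(t)$. Instead it transforms the \emph{whole} sum at once: conditions (\ref{phi cond}) and (\ref{f cond}) give (Proposition~\ref{key prop})
\[
P(t)=\frac{1}{2\pi}\int_{-1}^{1}\log\zeta(\tfrac12+i(t+y))\,\widehat V_X(y)\,dy+O(1),
\]
valid for all $X\leqs T^{2m}$. Now $\log\zeta$ itself is approximated in the mean by a short prime sum: Tsang's thesis gives $\int_T^{2T}|\log\zeta(\tfrac12+i(t+y))-\sum_{p\leqs T^{\theta/k}}p^{-1/2-i(t+y)}|^{2k}dt\ll T(ck)^{4k}$ uniformly in $|y|\leqs 1$. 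Raising the convolution to the $2k$th power, applying H\"older in the $y$-variables, and inserting Tsang's bound replaces each factor $\log\zeta$ by the short sum $\sum_{p\leqs T^{\theta/k}}p^{-1/2-i(t+y_j)}$; the remaining $y_j$-integrals against $\widehat V_X$ then simply reinstate the weight $V_X(\log p)$ by Fourier inversion. The diagonal combinatorics you describe for $S$ then finishes the job. In short, the missing idea is that the weight lets you trade the long Dirichlet polynomial for $\log\zeta$ wholesale, and it is $\log\zeta$---not the long prime tail---whose distance from a short sum is already controlled on RH.
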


Here, we see that the Gaussian behaviour persists for quite long sums and thus the well known heuristic that the $(p^{-it})_{p \text{ prime}}$ act as independent random variables seems fairly robust in this context. 
We also note that the main terms are in the form of purely diagonal contributions. Thus, if one computes these mean values in the usual way using correlation sum estimates such as \eqref{HL}, the above result shows that on RH the off-diagonal terms cannot contribute to the leading order in this case. Furthermore, it suggests that the error terms in formula \eqref{HL} and its generalisations to $\Lambda^{(k)}$ must exhibit some additional cancellation when averaged over $h$ since their best-possible pointwise bound is likely squareroot cancellation and this would not be sufficient for sums of length $X^k\geqs T^2$.

Our methods extend to other Dirichlet polynomials and, in particular, a weighted version of the Dirichlet polynomial associated to the logarithmic derivative $(\zeta^\prime/\zeta)(s)$:
\[
\sum_{n\leqs X}\frac{\Lambda(n)}{n^{1/2+it}}.
\]
In this case we are restricted to the mean square for reasons we shall explain below.  Contrary to the above case, our main term contains an off-diagonal type contribution involving a well-known function from Montgomery's work on the pair correlation of zeta zeros \cite{Mont}, namely, 
\begin{equation}\label{F}
F(u)
=
F(u,T)
=
\frac{2\pi}{T\log T}\sum_{0<\gamma_1,\gamma_2\leqs T}T^{-iu(\gamma_1-\gamma_2)}w(\gamma_1-\gamma_2)
\end{equation}
where the sum is over the ordinates $\gamma$ of the non-trivial zeros $\rho=1/2+i\gamma$ of  $\zeta(s)$ and $w(u)=4/(4+u^2)$. Our result, which we state for the specific weight in \eqref{proto weight}, is then as follows. 

\begin{thm}\label{log der thm}  Assume RH and suppose $X\leqs T^4$. Then
\begin{multline*}
\int_T^{2T}\bigg|\sum_{n\leqs X}\frac{\Lambda(n)}{n^{1/2+it}}\Big(1-\tfrac{\log n}{\log X}\Big)\bigg|^2dt
=
T\sum_{p\leqs \min(T,\,X)}\frac{\log^2 p}{p}\Big(1-\tfrac{\log p}{\log X}\Big)^2
\\
+
\mathds{1}_{X\geqs t}\cdot{t\log^2 t}\int_1^{\tfrac{\log X}{\log t}}F(u,t)\big(1-\tfrac{u\log t}{\log X}\big)^2du\,\bigg|_{t=T}^{2T}+o(T(\log T)^2)
\end{multline*}
where $\mathds{1}$ denotes the characteristic function.
\end{thm}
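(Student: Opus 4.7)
The plan is to use the explicit formula to express $D(t) := \sum_{n\leqs X}\Lambda(n)V(\log n)/n^{1/2+it}$, with $V(x) = (1-|x|/\log X)\mathds{1}_{|x|\leqs \log X}$, as a sum over the non-trivial zeros of $\zeta$, and then to compute the mean square through pair correlations, invoking Montgomery's function $F(u,t)$ directly.

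First I would apply Fourier inversion to get $V(\log n) = \frac{1}{2\pi i}\int_{(c)}\widehat{V}(w/i)\,n^{-w}\,dw$, interchange with the Dirichlet series of $-\zeta'/\zeta$, and shift the contour from $\Re w = c > 1/2$ to $\Re w = -c'$ for a small $c' > 0$. The pole at $w = 1/2 - it$ contributes $O(X^{1/2}/T^2)$ by condition (ii) with $m = 2$, admissible for $X \leqs T^4$. On RH, $\zeta'/\zeta(1/2 - c' + iv) \ll (\log|v|)^2$, and together with the decay of $\widehat{V}(w/i)$ on the shifted line afforded by property (iii), the remaining contour integral has mean square $o(T(\log T)^2)$. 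Thus $D(t) = -\sum_\gamma \widehat{V}(t-\gamma) + E(t)$ with $\int_T^{2T}|E(t)|^2\,dt = o(T(\log T)^2)$.

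Next, I would open the square and apply Plancherel. Since $\widehat{V}$ is concentrated on scale $1/\log X$ by property (iii), the inner $t$-integral extends from $[T,2T]$ to $\mathbb{R}$ at admissible cost, yielding
\[
\int_T^{2T}\Big|\sum_\gamma\widehat{V}(t-\gamma)\Big|^2 dt = 2\pi\sum_{\gamma_1,\gamma_2\in(T,2T]}\widehat{V^2}(\gamma_1-\gamma_2) + o(T(\log T)^2),
\]
where $\widehat{V^2}(\xi) := \int V(x)^2 e^{ix\xi}\,dx$. I would then introduce Montgomery's weight $w(\tau) = 4/(4+\tau^2)$, write $\sum = \sum w + \sum(1-w)$, and telescope $\sum_{(T,2T]} = \sum_{(0,2T]} - \sum_{(0,T]}$; the resulting cross terms are negligible by the concentration of $w$. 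Applying the identity $\widehat{V^2}(\xi) = (\log t)\int V(u\log t)^2\,t^{iu\xi}\,du$ to each piece of the telescoping, with the substitution parameter $t$ chosen to match the upper limit of the respective zero sum, the $w$-weighted contribution is expressible via $F(u,t)$ directly by its definition, producing the quantity
\[
\Bigl[t(\log t)^2\int V(u\log t)^2 F(u,t)\,du\Bigr]_{t=T}^{2T}.
\]

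The hard part is extracting the prime-sum main term from the combination of this expression with the $(1-w)$-complement $\sum_{(0,t]^2}(1-w(\tau))\widehat{V^2}(\tau)$. Substituting Montgomery's unconditional asymptotic $F(u,t) = t^{-2u}(\log t)(1+o(1)) + u(1+o(1))$ for $|u|\leqs 1$ into the $[0,1]$-portion of the $F$-integral and carrying out the change of variables $v = 2u\log t$, the $t^{-2u}\log t$ piece of $F$ combines with the $(1-w)$-complement---which, through the explicit formula for $\sum_\gamma x^\rho$, reduces to a prime sum---to produce, via PNT, precisely the diagonal $T\sum_{p\leqs\min(T,X)}(\log p)^2(1-\log p/\log X)^2/p$; the cutoff $\min(T,X)$ arises from whether $\log X/\log t$ exceeds $1$. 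The range $u\in[1,\log X/\log t]$, for which no asymptotic for $F$ is known, is left as the $F$-integral in the theorem. Collecting all the lower-order terms---the cross and boundary terms in the telescoping, the oscillatory tails of the $(1-w)$-complement, the $o(1)$ errors in Montgomery's asymptotic, and the contour-shift remainder $E(t)$---into an aggregate error of $o(T(\log T)^2)$ is the most delicate bookkeeping of the argument.
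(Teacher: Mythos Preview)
Your route is genuinely different from the paper's. The paper never shifts past the half-line: it writes the weighted sum as $\int S(t+y)f_X(y)\,dy+O(1)$ for an explicit kernel $f_X$ (Proposition~\ref{log der prop}), squares, and then computes the shifted correlations $\int_T^{2T}S(t+y_1)S(t+y_2)\,dt$ by adapting Goldston's mean-square argument (Proposition~\ref{S corr prop}). In that approach the prime-sum main term falls out cleanly from Goldston's decomposition $S\approx P_x+Z_x$, and only the $Z_x$ piece gets converted into an $F$-integral. In your scheme everything is packed into the zero sum and the prime sum has to be excavated from $F(u)$ on $[0,1]$ together with the $(1-w)$-complement; this is precisely where your sketch is thinnest.

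There are three concrete problems. First, the formula $D(t)=-\sum_\gamma \widehat V(t-\gamma)+E(t)$ cannot hold as written: since $V$ is even, $\widehat V$ is real-valued on $\mathbb R$, while $D(t)$ is genuinely complex. The explicit formula here produces the \emph{one-sided} transform $\int_0^{\log X}V(u)e^{iu(\gamma-t)}\,du$, and this changes the Plancherel step. Second, your claim that the integral on $\Re w=-c'$ has mean square $o(T(\log T)^2)$ is not supported: on that line $\widehat V(w/i)$ grows like $X^{c'}$ (property~(\ref{phi cond}) gives $X^{\Re(z)}$ only for $\Re(z)\geqs 0$), and balancing against mean-square bounds for $\zeta'/\zeta$ just left of the critical line yields at best $O(T(\log T)^2)$, the same order as the main term. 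Third, and most seriously, your account of how the prime sum appears is not right. On $[0,1]$ it is the \emph{linear} piece $u$ of Montgomery's asymptotic that, after the substitution $v=u\log t$, reproduces $\int_0^{\log t}v\,V(v)^2\,dv$ and hence (via PNT) the diagonal $\sum_{p\leqs t}p^{-1}(\log p)^2(1-\tfrac{\log p}{\log X})^2$; the $t^{-2u}\log t$ piece integrates to a constant of size $\tfrac12+o(1)$, contributing an extra $\tfrac12 t(\log t)^2$ that must then cancel against the $(1-w)$-complement and the telescoping cross terms. That cancellation is the heart of the matter and is not shown; your assertion that the $(1-w)$-complement ``reduces to a prime sum via the explicit formula for $\sum_\gamma x^\rho$'' is essentially running Montgomery's derivation of $F(u)\sim u$ backwards and does not by itself give the required identity. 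The paper's $S(t)$ route sidesteps all three issues by keeping the contour to the right of the half-line and by separating primes from zeros at the level of Goldston's approximate formula before any pair-correlation input is used.
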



 Regarding the function $F(u)$, Montgomery \cite{Mont} showed that 
\begin{equation}\label{F 1}
F(u)
=
u+o(1)+(1+o(1))T^{-2u}\log T
\end{equation}
uniformly for $0\leqs u\leqs 1$ as $T\to\infty$ (see the comment after formula (4.6) in \cite{Goldston}). We also have $F(u)\geqs 0$ and $F(-u)=F(u)$ (see \cite{HB}). Montgomery conjectured that for $u\geqs 1$, 
\begin{equation}\label{mont conj}
F(u)=1+o(1)
\end{equation}
as $T\to\infty$, and this implies the famous pair correlation conjecture for the zeros of $\zeta(s)$.  Note that for us, it would imply an  asymptotic of the form $\sim cT(\log T)^2$ 
in Theorem \ref{log der thm}. 
However, Montgomery derived the conjecture in \eqref{mont conj} by assuming \eqref{HL} in a strong form and this would likely allow for a direct proof of an asymptotic in Theorem \ref{log der thm} anyway. 
In lieu of  \eqref{mont conj} we shall have use for the following type of result which states that on RH,   
\begin{equation}\label{F int}
c_1<\int_{b}^{b+1}F(u)du<c_2
\end{equation} 
uniformly in $b$ (possibly dependent on $T$) for some constants $c_1,c_2>0$. See Lemma A of \cite{Goldston} or the Lemma of \cite{GG log der} for example. From this we can at least acquire the order.  

\begin{cor}Assume RH. Then for $T\leqs X\leqs T^4$ we have
\[
\int_T^{2T}\bigg|\sum_{n\leqs X}\frac{\Lambda(n)}{n^{1/2+it}}\Big(1-\tfrac{\log n}{\log X}\Big)\bigg|^2dt
\asymp T(\log T)^2.
\]
\end{cor}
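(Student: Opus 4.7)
The plan is to apply Theorem \ref{log der thm} and show that its main term has order $T\log^2 T$ uniformly for $X\in[T,T^4]$. Writing
\[
S(X):=T\sum_{p\leq\min(T,X)}\frac{\log^2 p}{p}\Big(1-\tfrac{\log p}{\log X}\Big)^2,\quad g(t):=\mathds{1}_{X\geq t}\,t\log^2 t\int_1^{\log X/\log t}F(u,t)\Big(1-\tfrac{u\log t}{\log X}\Big)^2du,
\]
the theorem yields
\[
\int_T^{2T}\bigg|\sum_{n\leq X}\frac{\Lambda(n)}{n^{1/2+it}}\Big(1-\tfrac{\log n}{\log X}\Big)\bigg|^2dt = S(X)+g(2T)-g(T)+o(T\log^2 T).
\]

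For the upper bound, Mertens' theorem gives $S(X)\ll T\log^2 T$, and the trivial weight bound $(1-u\log t/\log X)^2\leq 1$ combined with \eqref{F int} applied to each of the at most four unit sub-intervals of $[1,\log X/\log t]\subset[1,4]$ yields $g(t)\ll t\log^2 t\ll T\log^2 T$. For the lower bound, I would first show $S(X)\gg T\log^2 T$ uniformly: restricting to $p\leq T^{1/2}$ and using $X\geq T$ gives $(1-\log p/\log X)^2\geq 1/4$, so $S(X)\geq (T/4)\sum_{p\leq T^{1/2}}\tfrac{\log^2 p}{p}\gg T\log^2 T$ by Mertens. For the boundary contribution $g(2T)-g(T)$, I would split into two cases. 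If $T\leq X<2T$, then $g(2T)=0$ and the shortness $L_T-1\ll 1/\log T$ forces the weight to be $O(1/\log^2 T)$ on $[1,L_T]$, so $g(T)=O(T)=o(T\log^2 T)$ is negligible. If $X\geq 2T$, both $g(T)$ and $g(2T)$ are non-negative, and I would integrate by parts in $u$ to write $\int_1^L F(u,t)(1-u/L)^2du=(2/L)\int_1^L G(u)(1-u/L)\,du$ with $G(L):=\int_1^L F(u,t)\,du$, then apply the two-sided bound $c_1(u-2)_+\leq G(u)\leq c_2 u$ from \eqref{F int} to obtain explicit size estimates for $g(T)$ and $g(2T)$.

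The main obstacle is this last case: showing $S(X)+g(2T)$ strictly dominates $g(T)$ quantitatively, which is sharpest for $X$ near $T^4$ where $g(T)$ takes its largest values. The integration by parts gives $g(T)$ bounded above and $g(2T)$ bounded below by explicit expressions in $L_T,L_{2T}$ and the constants $c_1,c_2$, while partial summation from Mertens yields $S(X)=T\log^2 T\big(\tfrac{1}{2}-\tfrac{2r}{3}+\tfrac{r^2}{4}\big)+O(T\log T)$ with $r=\log T/\log X\in[1/4,1]$. The desired inequality then reduces to a numerical check in the constants $c_1,c_2$ of \eqref{F int}, whose explicit values under RH are supplied by Lemma A of \cite{Goldston} and the Lemma of \cite{GG log der}.
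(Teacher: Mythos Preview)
Your overall strategy is exactly the one the paper has in mind: deduce the corollary from Theorem~\ref{log der thm} together with the two-sided estimate \eqref{F int}. The upper bound is fine, and your treatment of the lower bound in the sub-range $T\leqs X<2T$ (where $g(2T)=0$ and $g(T)=O(T)$) is correct.

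The gap is in the lower bound for $2T\leqs X\leqs T^4$. You correctly reduce to showing $S(X)+g(2T)-g(T)\gg T\log^2 T$ and propose to finish via the crude envelope $c_1(u-2)_+\leqs G(u)\leqs c_2 u$ followed by a ``numerical check'' in $c_1,c_2$. But that check, with these bounds, does not succeed. Take $X=T^2$: then $L_T=2$ while $L_{2T}=2\log T/\log(2T)<2$, so the integration range for $g(2T)$ has length $<1$ and your lower bound collapses to $g(2T)\geqs 0$. Meanwhile $(1-u/2)^2\leqs\tfrac14$ on $[1,2]$ gives
\[
g(T)\leqs T\log^2 T\cdot\tfrac14\int_1^2 F(u,T)\,du\leqs \tfrac{c_2}{4}\,T\log^2 T,
\]
and your own formula yields $S(X)=\tfrac{11}{48}\,T\log^2 T+O(T\log T)$. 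The required inequality becomes $c_2<\tfrac{11}{12}$. No admissible upper constant in \eqref{F int} can satisfy this: any $c_2$ consistent with the conjectured value $F(u)\approx 1$ for $u\geqs 1$ must have $c_2\geqs 1$, and the explicit constants actually proved in \cite{Goldston} and \cite{GG log der} are certainly no smaller. (The obstruction is sharp in the sense that, given only $F\geqs 0$ and $\int_1^2 F\leqs c_2$, the bound $g(T)\leqs \tfrac{c_2}{4}T\log^2 T$ cannot be improved---mass could concentrate at $u=1$.) A similar failure occurs throughout the range, worsening as $X$ grows toward $T^4$.

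So the black-box bound \eqref{F int}, used in the way you propose, is not by itself enough to secure the lower bound. A complete argument needs finer input: either a quantitative comparison between $F(u,T)$ and $F(u,2T)$ so that $g(2T)$ and $g(T)$ can be played off against one another rather than bounded separately, or a return to the source of the $F$-term in the proof of Theorem~\ref{log der thm} to exploit directly the positivity of $\int_T^{2T}\bigl|\int Z_x(t+y)f_X(y)\,dy\bigr|^2\,dt$. The paper itself does not spell this step out.
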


Returning to Theorem \ref{moments thm}, we note the dependency on $k$ in the error terms is rather weak, and in fact, is too weak to get good large deviation bounds via Chebyshev's inequality. However, using similar tools and ideas from the proof of Theorem \ref{moments thm} we can find more efficient methods to compute the large deviations of these Dirichlet polynomials. Owing to large negative values of $\log|\zeta(\tfrac12+it)|$ we can only prove Gaussian bounds for large \emph{positive} values of the real part of our sum. The imaginary part is not limited in this way. 

\begin{thm}\label{large dev thm}Assume RH. Suppose that $\wh{V}_X(y)$ satisfies properties (\ref{V cond})--(\ref{f cond}) and that $\wh{V}_X(y)$ is positive. 
Then for $X\leqs T^{2m}$, $\epsilon>0$ and $W\geqs \sqrt{\log\log T}$ 
\begin{multline*}
\frac{1}{T}\mu\bigg(t\in [T,2T]: \bigg|\Im\sum_{n\leqs X}\frac{\Lambda(n)V_X(\log n)}{n^{1/2+it}\log n}\bigg|\geqs W \cdot V(0)\bigg)
\\
\ll \exp(-(1-\epsilon)W^2/\log\log T) + \exp(-C_\epsilon W\log W)
\end{multline*}
and 
\begin{multline*}
\frac{1}{T}\mu\bigg(t\in [T,2T]: \Re\sum_{n\leqs X}\frac{\Lambda(n)V_X(\log n)}{n^{1/2+it}\log n}\geqs W \cdot V(0)\bigg)
\\
\ll \exp(-(1-\epsilon)W^2/\log\log T) + \exp(-D_\epsilon W\log W)
\end{multline*}
for some constants $C_\epsilon, D_\epsilon>0$ where $\mu$ denotes Lebesgue measure.
\end{thm}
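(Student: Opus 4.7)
The plan is to apply Chebyshev's inequality to a moment of $P(t)=\sum_{n\leqs X}\Lambda(n)V_X(\log n)/(n^{1/2+it}\log n)$ at a level $2k$, with $k=k(W)$ chosen optimally for each range of $W$. The two terms in the claimed bound reflect two regimes: the Gaussian term $\exp(-(1-\epsilon)W^2/\log\log T)$ arises when one can afford to take $k=\lfloor (1-\epsilon)W^2/\log\log T\rfloor$, while the $\exp(-C_\epsilon W\log W)$ term arises when $W$ is so large that the moment computation breaks down and $k$ must be taken as large as possible (forcing $k\asymp W/\log W$). The error term $(Ck)^{4k}(\log\log T)^{k-1/2}$ in Theorem~\ref{moments thm} is too lossy for these ranges, so the core of the proof is to obtain much sharper moment bounds that are valid on a range of $k$ large enough to reach both regimes.

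First I would write, using conditions (\ref{V cond})--(\ref{phi cond}) and RH to shift contours,
\[
P(t)=\frac{1}{2\pi}\int_{-\infty}^\infty \widehat{V}_X(y)\,\log\zeta(\tfrac12+it+iy)\,dy,
\]
and then apply a Selberg-type explicit formula to split the integrand into a truly short Dirichlet polynomial $P_Y(t)=\sum_{p\leqs Y}V_X(\log p)p^{-1/2-it}/\log p$ (plus a harmless prime-power term) with $Y=T^{\theta/k}$, together with a term coming from zeros of $\zeta$ smoothed against $\widehat{V}_X$. Condition (\ref{f cond}) makes this zero-contribution sufficiently concentrated, so that on average over $t\in[T,2T]$ it is of bounded size; the positivity of $\widehat{V}_X$ ensures that for the upper tail of $\Re P$ the same term is bounded pointwise from above by the classical conditional bound $\log|\zeta(\tfrac12+it)|\ll \log T/\log\log T$. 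This reduces both statements to tail bounds on deviations of $P_Y(t)$.

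For the short polynomial $P_Y(t)$ the constraint $Y^k=T^\theta\leqs T$ means Montgomery--Vaughan applies with only a diagonal main term, yielding
\[
\int_T^{2T}\bigl|\Im P_Y(t)\bigr|^{2k}dt=\frac{(2k)!}{2^{2k}k!}T\bigg(\sum_{p\leqs Y}\frac{V_X(\log p)^2}{p}\bigg)^{\!k}+O(T^{\theta+\epsilon}Y^k),
\]
with an error polynomially smaller than the main term. Chebyshev then gives $\mu(|\Im P_Y|>WV(0))/T\ll k!\,(\log\log T/W^2)^k$ and Stirling's formula, optimized over $k$ in the two regimes of $W$, produces the two announced bounds. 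The real-part upper tail is handled by the same moment computation, with the positivity of $\widehat{V}_X$ and the conditional upper bound on $\log|\zeta|$ providing the required one-sided pointwise control.

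The main obstacle will be the first step: rigorously bounding the zero-contribution term in the explicit formula representation of $P(t)-P_Y(t)$ using only (\ref{V cond})--(\ref{f cond}), uniformly in $k$ up to the maximum value required for the $\exp(-C_\epsilon W\log W)$ regime. In particular, translating the \emph{one-sided} RH bound on $\log|\zeta|$ into a one-sided pointwise bound on that residual term, without any matching lower bound, is what forces the asymmetric statement for $\Re P$: large \emph{negative} deviations of $\Re P$, corresponding to atypical proximity of $t$ to a zero of $\zeta$, are genuinely outside the reach of this method.
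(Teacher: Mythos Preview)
Your overall shape is right---convolution formula, replace $\log\zeta$ by a short prime sum, then Chebyshev on moments---and you correctly flag the zero contribution as the crux. But your proposed handling of that term is where the argument would fail, and it is exactly the point at which the paper departs from the Theorem~\ref{moments thm} method.

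For the imaginary part you want to bound the zero term ``on average''. That is essentially Tsang's estimate \eqref{Tsang diff}, and it only gives $2k$th moments of size $(Ck)^{4k}$ (or $(Ck)^{2k}$); feeding this into Chebyshev does not reach the range $k\asymp W^2/\log\log T$ needed for the Gaussian bound, let alone the larger $k$ needed for the $\exp(-CW\log W)$ regime. For the real part your appeal to the Chandee--Soundararajan bound is misplaced: that result bounds $\log|\zeta|$ itself, not the residual zero term in a Selberg-type decomposition, so it only restricts the range of $W$ (which the paper indeed uses) and does not control $P(t)-P_Y(t)$.

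The paper's device is different in kind. It never bounds moments of the zero term at all. Instead, it applies \emph{pointwise} inequalities inside the convolution: Soundararajan's main proposition giving $\log|\zeta(\tfrac12+it)|\leqs f_1(t)+f_2(t)+(\tfrac34-\epsilon)\tfrac{\log t}{\log Y}$ for the real part, and Selberg's formula giving $|S(t)|\leqs h_1(t)+\cdots+h_4(t)+C_0\tfrac{\log t}{\log x}$ for the imaginary part. In each case the zero contribution has been replaced by a \emph{deterministic} term $\asymp \log T/\log Y$, and choosing $Y=T^{A/W}$ (with $A$ as in \eqref{A}, \eqref{A prime}) makes this $\leqs cW$ outright, absorbing it into the threshold. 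Positivity of $\widehat{V}_X$ preserves these one-sided inequalities under the convolution, and H\"older's inequality (with weight $\widehat{V}_X$) reduces moments of the convolved sums $g_j,\ell_j$ to moments of the $f_j,h_j$ themselves, which are handled by Lemma~\ref{moments lem}. The split into a very short range $p\leqs Z=Y^{1/\log\log T}$ and a tail $Z<p\leqs Y$ is also essential: it is what produces both the Gaussian term (from $f_1$, with $k_1\approx W^2/\log\log T$) and the $\exp(-cW\log W)$ term (from $f_2$, with $k_2\approx W/A$) simultaneously.
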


\begin{rem}
For the imaginary part it is not strictly necessary to have $\wh{V}_X(y)$ positive. If exception to this occurs then the same bound holds provided we replace $V(0)$ on the left hand side with $\frac{1}{2\pi}\int_{-1}^1|\wh{V}_X(y)|dy$.    
\end{rem}

We note that these results show some similarity with Soundararajan's \cite{Sound} large deviation bounds for $\log|\zeta(\tfrac12+it)|$ and this is no coincidence since we make key use of his arguments. Note that the Gaussian term dominates for $W\ll\log_2 T\log_3T$. Previously, results of this quality in such a range would only be accessible for $X\leqs T^{1/(\log_2 T)(\log_3 T)^2}$. Also, the bound  $\ll e^{-CW\log W}$ holds in the full range of $W$ with such results previously requiring $X\ll (\log T)^\theta$ for some $\theta\leqs 2$ (see Lemma 3 of \cite{H split} for example). 
It is possible that one could use these bounds  to help compute the characteristic/moment generating functions of these polynomials for reasonably large $X$.  

Turning to the proofs, we first note the following observation which provides a loose basis for our arguments. For large $t\in [T,2T]$ the explicit formula roughly states that
\[
\log\zeta(\tfrac12+it)\approx \sum_{p\leqs X}\frac{1}{p^{1/2+it}} + \sum_{\rho}\int_{1/2}^\infty \frac{X^{\rho-\sigma-it}}{\rho-\sigma-it}d\sigma
\]
for a given parameter $X\geqs 2$. 
Although the sum over zeros can get large pointwise, on average it is of the order $\log T/\log X$ since this is the expected number of zeros in the window $|\gamma-t|<1/\log X$ and the exponential integral localises the sum to such $\gamma$. Morally, this fact is independent of whether $X\leqs T$ or $X>T$.
Thus, on comparing the explicit formula with two different parameters $X,Y$ with $Y=T^\epsilon$, say, we can expect that on average 
\begin{equation}\label{heuristic}
\sum_{p\leqs X}\frac{1}{p^{1/2+it}}\approx \sum_{p\leqs Y}\frac{1}{p^{1/2+it}}+O(1),
\end{equation}    
that is, our long Dirichlet polynomial can be replaced by a short one in the mean.  

In practice our proofs run slightly differently. We avoid dealing directly with zeros by expressing our sum as a contour integral and shifting close to the half-line, but not past it. Truncating the integral at a low height and raising it to the $2k$th power we see that we need to compute the shifted  $2k$th moment of $\log\zeta(\tfrac12+it+z_j)$ for small shifts $z_j$. Here, we can express things in terms of short sums by approximating the logarithm in the mean with
\[
\sum_{p\leqs T^{1/k}}\frac{1}{p^{1/2+it+z_j}}+O(1).
\] 
Then once the shifted $2k$th moment has been computed it is a fairly simple matter to perform the $z_j$ integrals and we acquire our asymptotic formula. The properties of our weights allow us to carry out this procedure without any real losses, which would not be the case for the usual Perron's formula.

For the Dirichlet polynomial associated to the logarithmic derivative we can integrate by parts to get a similar contour integral involving $\log\zeta(\tfrac12+it+z_j)$, although we gain a factor of $\log X$ in the process. This is the expected average order of magnitude which means we must be more precise in our shifted moment formulae. In particular,  we are required to give asymptotics for the shifted second moment \[\int_T^{2T}\log\zeta(\tfrac12+it+z_1)\log\zeta(\tfrac12+it+z_2)dt\] down to the level $o(T)$. By using a further contour integration argument we can reduce this to the study of 
\[
\int_T^{2T}S(t+y_1)S(t+y_2)dt
\]
where $S(t)=\tfrac1\pi\Im\log\zeta(\tfrac12+it)$ and $|y_j|\ll \log T$. For this we apply the methods of Goldston \cite{Goldston} which give lower order terms in the second moment of $S(t)$. Our precise result is stated in Proposition \ref{S corr prop} below. The process of specialising to $S(t)$ entails some extra mild conditions on the weights and so in an effort to keep the exposition simple we have restricted ourselves to \eqref{proto weight}, although Theorem \ref{log der thm} should hold for more general weights.


 It would of course be interesting to remove the assumptions of the Riemann hypothesis and the weights in our results, although it seems difficult to give good mean bounds for 
\[
\log\zeta(\tfrac12+it)-\sum_{p\leqs X}\frac{V_X(\log p)}{p^{1/2+it}}
\]  
when $X\geqs T$ without  such assumptions. Also, a relaxing of condition (\ref{f cond}) would be welcome since, as it stands, common weights such as smooth approximations to the characteristic function of an interval are excluded.

\vspace{0.5cm}
 {\bf Acknowledgments.} The author thanks Pavel Mozolyako for the invitation to the Euler Mathematics Institute, St. Petersburg, where this work was carried out under the support of the Ministry of Science and Higher Education of the Russian Federation, agreement no. 075-15-2019-1619.




\section{Convolution formulae}\label{Conv sec}

\begin{lem}\label{conv lem}
Assume RH and suppose $V_X$ satisfies property (\ref{phi cond}). Then for $1/2\leqs \sigma\leqs 1$ and $t\geqs 1$ we have 
\begin{equation}\label{R}
\sum_{n\geqs 2}\frac{\Lambda(n)V_X(\log n)}{n^{\sigma+it}\log n}
=
\frac{1}{2\pi}\int_{-\infty}^{\infty}\log\zeta(\sigma+i(y+t))\widehat{V}_X(y)dy
+
O\big(\tfrac{X^{1-\sigma}}{t^m}\big).
\end{equation}
\end{lem}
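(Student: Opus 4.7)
The plan is to represent $V_X(\log n)$ via a vertical contour integral of $\widehat V_X$, swap sum and integral on a contour where $\log\zeta$ has its absolutely convergent Dirichlet series, and then shift the contour back to $\Re z=0$, tracking the contribution from the logarithmic branch singularity of $\log\zeta$ at $s=1$.

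Since $V_X$ has compact support in $[-\log X,\log X]$, $\widehat V_X$ extends to an entire function, and by Paley--Wiener together with property (\ref{phi cond}) we have
\[
V_X(\log n)=\frac{1}{2\pi i}\int_{(c)}\widehat V_X(z/i)\,n^{-z}\,dz
\]
for any $c\in\mathbb R$, with absolute convergence on each vertical contour secured by the $|\Im z|^{-m}$ decay, $m\geq 2$. Taking $c>1-\sigma$, substituting into the sum on the left of \eqref{R}, and interchanging the absolutely convergent sum and integral, the inner sum becomes $\log\zeta(\sigma+z+it)$ by its Dirichlet series (valid for $\Re(\sigma+z+it)>1$), giving
\[
\sum_{n\geq 2}\frac{\Lambda(n)V_X(\log n)}{n^{\sigma+it}\log n}=\frac{1}{2\pi i}\int_{(c)}\widehat V_X(z/i)\log\zeta(\sigma+z+it)\,dz.
\]

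Next I would shift the contour from $\Re z=c$ down to $\Re z=0$. On RH, $\log\zeta(s)$ is holomorphic in $\{\Re s>1/2\}$ apart from a logarithmic branch point at $s=1$, which in $z$ sits at $z_0=1-\sigma-it$ inside the strip $0<\Re z<c$. Using the principal branch of $\log(s-1)$ (cut along $s\in(-\infty,1]$), the branch cut of the integrand in the $z$-plane is the horizontal ray $\{\Im z=-t,\ \Re z\leq 1-\sigma\}$, whose intersection with the strip is the short segment from $(0,-t)$ to $z_0$. Closing the rectangle at $\Im z=\pm R$ and sending $R\to\infty$, the horizontal contributions vanish since property (\ref{phi cond}) with $m\geq 2$ dominates the $O(\log|\Im z|)$ growth of $\log\zeta$ on RH; a small keyhole around $z_0$ also contributes nothing in the limit of vanishing radius. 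Cauchy's theorem applied to the strip with the cut excised then yields
\[
\frac{1}{2\pi i}\left[\int_{(c)}-\int_{(0)}\right]=\pm\int_{0}^{1-\sigma}\widehat V_X((x-it)/i)\,dx,
\]
coming from the jump $\pm 2\pi i$ of $\log\zeta$ across the cut of $-\log(s-1)$; the remaining $\Re z=0$ integral, after the substitution $z=iy$, is exactly the main term on the right side of \eqref{R}, interpreted via the principal branch.

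Finally, the cut integral is bounded using (\ref{phi cond}) at $\Re z=x\in[0,1-\sigma]$, $|\Im z|=t$:
\[
\Bigl|\int_0^{1-\sigma}\widehat V_X((x-it)/i)\,dx\Bigr|\ll\frac{1}{t^m}\int_0^{1-\sigma}X^x\,dx\ll\frac{X^{1-\sigma}}{t^m\log X}\ll\frac{X^{1-\sigma}}{t^m},
\]
matching the claimed error. The main subtlety is the choice of branch: the horizontal principal cut keeps the interior cut segment short (length $1-\sigma$), so the error is controlled by $\widehat V_X(z_0/i)\ll X^{1-\sigma}/t^m$ as needed, whereas a vertical cut direction from $z_0$ would run to $\pm i\infty$ inside the strip and would pick up a non-decaying contribution from the global mass of $\widehat V_X$ instead of the desired decay in $t$.
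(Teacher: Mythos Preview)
Your approach is essentially the paper's: express $V_X(\log n)$ as a vertical contour integral, sum to obtain $\log\zeta$, and shift from $\Re z=c$ to $\Re z=0$ with a Hankel/keyhole detour around the branch point at $z_0=1-\sigma-it$, whose contribution gives the $O(X^{1-\sigma}/t^m)$ error.

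Two technical points the paper handles explicitly that you gloss over. First, when $\sigma=1/2$ the target contour $\Re z=0$ runs through the logarithmic singularities of $\log\zeta$ at the nontrivial zeros; the paper deals with these by small semicircular indentations and then lets the radii go to zero, using that the singularities are integrable. Second, your claim that the horizontal pieces vanish because ``$\log\zeta$ has $O(\log|\Im z|)$ growth on RH'' is not justified pointwise near $\Re s=1/2$: $\log\zeta$ is unbounded there. The paper instead uses the integral bound $\int_\sigma^2|\log\zeta(\alpha+iy)|\,d\alpha\ll\log y$, which is exactly what is needed to estimate a horizontal segment and which also avoids having to select heights $R$ away from zero ordinates.
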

\begin{proof}
This is essentially Lemma 5 of Tsang \cite{Tsang}. We shall give details for completeness. First note that by Fourier inversion
\[
V_X(\log n)=\frac{1}{2\pi }\int_{-\infty}^\infty \widehat{V}_X(x)n^{-ix}dx
=
\frac{1}{2\pi i}\int_{(c)}\widehat{V}_X(\tfrac{z}{i})n^{-z}dz 
\] 
for any $c>0$ after shifting contours. Here and throughout $\int_{(c)}$ denotes the integral $\int_{c-i\infty}^{c+i\infty}$. Choosing $c=1-\sigma+1/\log X$ and interchanging the order of sum and integral we get 
\[
\sum_{n\geqs 2}\frac{\Lambda(n)V_X(\log n)}{n^{\sigma+it}\log n}
=
\frac{1}{2\pi  i}\int_{(c)} \log\zeta(\sigma+it+z)\widehat{V}_X(\tfrac zi)dz.
\]
Truncating the integral at height $\Im(z)=\pm Z$ incurs an error of size $\ll X^{1-\sigma}\log_2 X/Z^{m-1}$ by property (\ref{phi cond}) and the bound $\zeta(1+1/\log X+iy)\ll \log X$ $\forall y\in\mb{R}$. We then consider the resultant integral as part of the rectangular contour with vertices at $\pm iZ$, $1-\sigma+1/\log X\pm iZ$ whose left edge has small semicircular indentations excluding the zeros at $z=i(\gamma-t)$  if $\sigma=1/2$ and which has a line from $1-\sigma-it$ to $1-it$, a small circular contour enclosing the singularity at $z=1-it$, and then a line back to the left edge of the contour.

The upper and lowermost horizontal components of the contour contribute $\ll X^{1-\sigma}Z^{-m}$ $ \log(Z+t)$ since $\int_\sigma^2 |\log\zeta(\alpha+iy)|d\alpha \ll \log y$ for all large $y$ and $\sigma\geqs 1/2$ (see formula \eqref{9.6} below for example). Thus we can let $Z\to\infty$. 

By property (\ref{phi cond}), the Hankel contour enclosing $z=1-it$ contributes $\ll X^{1-\sigma}t^{-m}$. On the remaining left edge of the contour, if $\sigma=1/2$, we can let the radii of the semicircular indentations tend to zero (indeed, the singularities are logarithmic and hence integrable). The result then follows. 
\end{proof}

Our next lemma truncates the integral in \eqref{R} to a suitable height when $X$ is sufficiently large in terms of $t$.

\begin{lem}\label{trunc lem}
Assume RH and suppose $V$ satisfies (\ref{f cond}). Then for $1/2\leqs \sigma\leqs 1$, large $t\in [T,2T]$ and $X$ satisfying $\log X\asymp \log T$ we have 
\begin{equation}\label{trunc}
\int_{-\infty}^{\infty}\log\zeta(\sigma+i(y+t))\widehat{V}_X(y)dy
=
\int_{-1}^{1}\log\zeta(\sigma+i(y+t))\widehat{V}_X(y)dy
+O(1)
\end{equation}
\end{lem}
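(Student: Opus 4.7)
The plan is to show that the tail contribution
\[
\int_{|y|>1}\log\zeta(\sigma+i(y+t))\widehat{V}_X(y)\,dy
\]
is $O(1)$; subtracting this from the full integral in \eqref{R} then yields the stated identity. I would use two ingredients: a decay estimate for $\widehat{V}_X$ on the tail, coming from property (\ref{f cond}), and a pointwise RH bound for $\log\zeta$.

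For the weight, substituting $y\mapsto y\log X$ in the bound of (\ref{f cond}) produces
\[
|\widehat{V}_X(y)| \ll \frac{\log X}{1+(y\log X)^2},
\]
so for $|y|\geqs 1$ we obtain the simpler estimate $|\widehat{V}_X(y)| \ll 1/(y^2\log X)$. This is the crucial ``concentration near the origin'' input; without it, the tail of $\widehat{V}_X$ would only be integrable like $1/y^2$, and the factor $1/\log X$ saving is what ultimately absorbs the $\log T$ coming from $\log\zeta$.

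For the zeta factor I would invoke the standard RH-conditional bound $\log\zeta(\sigma+iu)\ll \log(|u|+2)$ valid uniformly in $1/2\leqs \sigma\leqs 1$ (e.g.\ Titchmarsh, Theorem 14.2; in fact Littlewood gives the sharper $\ll \log|u|/\log\log|u|$). Splitting the tail into the regions $1<|y|\leqs t$ and $|y|>t$ gives $|\log\zeta(\sigma+i(y+t))|\ll \log T$ and $|\log\zeta(\sigma+i(y+t))|\ll \log|y|$ respectively.

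Combining the two bounds and using $\log X\asymp \log T$,
\[
\int_{1<|y|\leqs t}\frac{\log T}{y^2\log X}\,dy \ll \frac{\log T}{\log X}=O(1),\qquad \int_{|y|>t}\frac{\log|y|}{y^2\log X}\,dy\ll \frac{\log t}{t\log X}=o(1),
\]
which together give the required $O(1)$. I do not anticipate any serious obstacle: the whole argument is a direct integral estimate once one has the $\widehat{V}_X$ decay from (\ref{f cond}) and the RH bound for $\log\zeta$. The only mild care needed is that $\log\zeta$ has logarithmic (hence locally integrable) singularities at zero ordinates, which causes no issue since $\widehat{V}_X$ is smooth.
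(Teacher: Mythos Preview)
Your overall strategy---bound $\widehat{V}_X$ on the tail via (\ref{f cond}) and combine with a size estimate for $\log\zeta$---matches the paper's, but there is a real gap in the zeta input. The pointwise bound $\log\zeta(\sigma+iu)\ll\log(|u|+2)$ is \emph{not} available uniformly on $1/2\leqs\sigma\leqs1$: at $\sigma=1/2$ one has $\log|\zeta(\tfrac12+iu)|\to-\infty$ at every zero ordinate, so $|\log\zeta(\tfrac12+iu)|$ is unbounded on every interval. Titchmarsh's Theorem~14.2 (and Littlewood's refinement) bound $|\zeta|$ from above, hence only $\Re\log\zeta$ from above; they say nothing about large negative values of the real part. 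Your closing remark that the singularities are ``logarithmic (hence locally integrable)'' is the right observation, but it is precisely the point that needs work---as written, your main estimate plugs a false pointwise bound into the integral, and the final sentence does not repair it.

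The paper's proof supplies exactly this missing step. It decomposes the tail dyadically into unit intervals $[n,n+1]$, bounds $|\widehat{V}_X|$ by $\ll 1/(n^2\log X)$ on each via (\ref{f cond}), and then proves the \emph{integrated} estimate
\[
\int_n^{n+1}|\log\zeta(\sigma+i(y+t))|\,dy\ \ll\ \log(t+n)
\]
uniformly for $\tfrac12\leqs\sigma\leqs1$. This uses the local expansion (formula \eqref{9.6} in the paper)
\[
\log\zeta(\sigma+iu)=\sum_{|\gamma-u|\leqs1}\log\big(\sigma-\tfrac12+i(u-\gamma)\big)+O(\log(2+|u|)),
\]
so that on each unit interval one is left with $O(\log(t+n))$ terms of the form $\int_{-1}^{1}|\log(\sigma-\tfrac12+iy)|\,dy=O(1)$. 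Summing $\sum_{n\geqs1}\log(t+n)/(n^2\log X)\ll\log t/\log X\ll1$ then gives the claim. If you replace your pointwise bound by this unit-interval $L^1$ bound, your argument goes through and is essentially identical to the paper's.
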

\begin{proof}

By property (\ref{f cond}) of $\widehat{V}_X$ we have
\[
\int_1^\infty \log\zeta(\sigma+i(y+t))\widehat{V}_X(y)dy
\ll 
\frac{1}{\log X}\sum_{n=1}^\infty \frac{1}{n^2}\int_n^{n+1}|\log\zeta(\sigma+i(y+t))|dy.
\]
Formula 9.6 (B) of \cite{T} states that
\begin{equation}\label{9.6}
\log\zeta(\sigma+iu) 
=
\sum_{|\gamma-u|\leqs 1}\log(\sigma-\tfrac12+i(u-\gamma))+O(\log(2+|u|))  
\end{equation}
for $-1\leqs \sigma\leqs2$ and large $u$. The above integral is thus
\begin{align*}
\ll &
\int_n^{n+1}\sum_{|\gamma-(y+t)|\leqs 1}|\log(\sigma-\tfrac12+i(y+t-\gamma))|dy+\log(t+n)
\\
\ll &
\sum_{|\gamma-(n+t)|\leqs 2}\int_{-1}^1 |\log(\sigma-\tfrac12+iy)|dy+\log(t+n)\ll \log(t+n).
\end{align*}
Therefore, our tail integral is $\ll 1$ and of course the same bound holds for the other tail integral, thus giving the result.
\end{proof}

We combine Lemmas \ref{conv lem} and \ref{trunc lem} into the following key proposition. 

\begin{prop}\label{key prop} Assume RH and suppose $V_X$ satisfies (\ref{phi cond}) and (\ref{f cond}). Then for large $t\in[ T,2T]$ and $T^\epsilon\leqs X\leqs T^{2m}$,
\begin{equation}\label{sum form}
\sum_{n\geqs 2}\frac{\Lambda(n)V_X(\log n)}{n^{\sigma+it}\log n}
=
\frac{1}{2\pi}\int_{-1}^1\log\zeta(\sigma+i(y+t))\widehat{V}_X(y)dy
+
O(1).
\end{equation}
\end{prop}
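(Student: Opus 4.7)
The plan is to combine Lemma \ref{conv lem} and Lemma \ref{trunc lem} directly, so the work reduces to verifying that both error terms are $O(1)$ in the stated parameter range.

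First I would apply Lemma \ref{conv lem} to the left-hand side to obtain
\[
\sum_{n\geqs 2}\frac{\Lambda(n)V_X(\log n)}{n^{\sigma+it}\log n}
=
\frac{1}{2\pi}\int_{-\infty}^{\infty}\log\zeta(\sigma+i(y+t))\widehat{V}_X(y)dy
+
O(X^{1-\sigma}/t^m).
\]
To see the error is $O(1)$, use $1/2\leqs \sigma\leqs 1$ to bound $X^{1-\sigma}\leqs X^{1/2}$, and then combine with $X\leqs T^{2m}$ and $t\asymp T$ to obtain $X^{1-\sigma}/t^m \leqs T^m/T^m \ll 1$. This calculation explains the appearance of $2m$ as the exponent in the allowed range of $X$: it is precisely the largest value compatible with the error from Lemma \ref{conv lem} being absorbed into an $O(1)$ term uniformly for $\sigma\geqs 1/2$.

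Next I would verify the hypothesis $\log X\asymp \log T$ of Lemma \ref{trunc lem}. This is immediate from $T^\epsilon\leqs X\leqs T^{2m}$, which gives $\epsilon\log T\leqs \log X\leqs 2m\log T$. Applying Lemma \ref{trunc lem} to the full integral then truncates the range to $(-1,1)$ with an additional $O(1)$ error, and summing the two errors yields the claimed formula.

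There is essentially no obstacle beyond bookkeeping, since the proposition is just a packaging of the two preceding lemmas. The only point worth stressing is the balance between the growth rate of the error $X^{1-\sigma}/t^m$ coming from Lemma \ref{conv lem} and the upper limit on $X$: this balance is what pins down the range $X\leqs T^{2m}$ in the statement, and it is crucial here that $\sigma$ is bounded below by $1/2$, which is the case relevant for the moment computations of Theorem \ref{moments thm}.
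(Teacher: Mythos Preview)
Your proposal is correct and follows exactly the same approach as the paper, which simply states that the proposition is obtained by combining Lemmas \ref{conv lem} and \ref{trunc lem}. Your explicit verification that $X^{1-\sigma}/t^m \ll 1$ under the hypotheses $\sigma\geqs 1/2$, $X\leqs T^{2m}$, $t\asymp T$, and that $\log X\asymp \log T$ follows from $T^\epsilon\leqs X\leqs T^{2m}$, is precisely the bookkeeping the paper leaves implicit.
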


For the Dirichlet polynomial associated with the logarithmic derivative we have the following result. 

\begin{prop}\label{log der prop}Assume RH. Let $t\in[T,2T]$ and $T^\epsilon\leqs X\leqs T^4$. Then 
\[
\sum_{n\leqs X}\frac{\Lambda(n)}{n^{1/2+it}}(1-\tfrac{\log n}{\log X}) 
=
\int_{-\log T/\log_2 T}^{\log T/\log_2 T}S(t+y)f_X(y)dy+O(1)
\]
where 
\[
f_X(y)=\frac{X^{iy}(2-iy\log X)-2-iy\log X}{y^3\log X}
\]
\end{prop}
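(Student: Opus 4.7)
The plan is to follow the same strategy as the proof of Lemma~\ref{conv lem} and Proposition~\ref{key prop}, augmented by an integration by parts (to recover the missing $\log n$ and account for the $\log X$ in the prefactor) and a subtraction of Laurent polar parts (to identify the kernel $f_X$).

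First I would derive the Perron-type identity
\[
\sum_{n\le X}\frac{\Lambda(n)}{n^{1/2+it}}\Bigl(1-\tfrac{\log n}{\log X}\Bigr) = \frac{1}{2\pi i\log X}\int_{(c)}\frac{X^s}{s^2}\Bigl(-\frac{\zeta'}{\zeta}\Bigr)(\tfrac12+it+s)\,ds
\]
with $c=1/2+1/\log X$, using the Mellin pair $\tfrac{\log(X/n)}{\log X}\mathbf{1}_{n\le X}=\tfrac{1}{2\pi i\log X}\int_{(c)}(X/n)^s s^{-2}\,ds$. Following the scheme of Lemma~\ref{conv lem}, shift the contour down to $\Re s=1/\log T$; this crosses the simple pole of $-\zeta'/\zeta$ at $s=1/2-it$, contributing the residue $X^{1/2-it}/((1/2-it)^2\log X)=O(1)$ for $X\le T^4$ and $t\in[T,2T]$. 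Then, integrating by parts in $s$ using $(\log\zeta)'=\zeta'/\zeta$ (with vanishing boundary terms from $\log\zeta\ll\log|\Im s|$), the integral becomes
\[
\frac{1}{2\pi i\log X}\int_{(1/\log T)}\log\zeta(\tfrac12+it+s)\,\phi(s)\,ds,\qquad\phi(s):=\frac{X^s(s\log X-2)}{s^3}.
\]

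Next I would split $\phi=\phi_{\mathrm{sing}}+\phi_{\mathrm{reg}}$, where $\phi_{\mathrm{sing}}(s)=-2/s^3-\log X/s^2$ is the Laurent-principal part of $\phi$ at $s=0$ and $\phi_{\mathrm{reg}}$ is analytic there with $\phi_{\mathrm{reg}}(0)=(\log X)^3/6$. The singular part contributes only $O(1)$ after division by $\log X$: shifting its contour out to $\Re s\to+\infty$ where the Dirichlet series for $\log\zeta$ converges, and accounting for the branch cut emanating from the logarithmic singularity at $s=1/2-it$, yields $\int_{(1/\log T)}\log\zeta\,\phi_{\mathrm{sing}}\,ds=O(\log X/T)$. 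For the regular part, the contour may be deformed to $\Re s=0$---only the integrable logarithmic singularities of $\log\zeta$ at zeros of $\zeta$ are encountered---and a direct calculation from the Taylor expansion of $X^s(s\log X-2)$ at $s=0$ yields the key algebraic identity $\phi_{\mathrm{reg}}(iy)=-i\log X\cdot f_X(y)$. Parametrising $s=iy$ therefore gives
\[
-\frac{i}{2\pi}\int_{-\infty}^{\infty}\log\zeta(\tfrac12+i(t+y))\,f_X(y)\,dy,
\]
and truncating to $|y|\le Y=\log T/\log_2 T$ costs only $O(1)$ by Soundararajan's RH-conditional bound $\log\zeta(\tfrac12+iu)\ll\log T/\log_2 T$ combined with $f_X(y)\ll 1/|y|^2$ at infinity.

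The main obstacle is turning this $\log\zeta$-integral into the desired $S$-integral. Splitting $\log\zeta(\tfrac12+i(t+y))=\log|\zeta(\tfrac12+i(t+y))|+i\pi S(t+y)$, the imaginary part contributes $\tfrac12\int S(t+y)f_X(y)\,dy$ directly. The remaining real-part contribution $-\tfrac{i}{2\pi}\int\log|\zeta(\tfrac12+i(t+y))|f_X(y)\,dy$ must also equal $\tfrac12\int Sf_X\,dy+O(1)$. I would establish this by exploiting analyticity: $f_X(y)$, with numerator vanishing to order three at $y=0$, extends to an entire function that decays like $1/|y|^2$ in the upper half-plane $\Im y>0$ and hence lies in the upper Hardy space $H^2$; meanwhile $\log\zeta(\tfrac12+i(t+y))$ extends holomorphically into the lower half-plane $\Im y<0$ (where $\Re s>\tfrac12$), which on the critical line gives the Kramers--Kronig relation $\log|\zeta|=\mathcal{H}(\pi S)$. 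Combining this with the Hardy-space identity $\mathcal{H}(f_X)=-if_X$ and the antisymmetry $\int\mathcal{H}(g)h\,dy=-\int g\,\mathcal{H}(h)\,dy$, one obtains $\int\log|\zeta|\,f_X\,dy=i\pi\int Sf_X\,dy+O(1)$, hence $-\tfrac{i}{2\pi}\int\log|\zeta|\,f_X\,dy=\tfrac12\int Sf_X\,dy+O(1)$. Adding the two halves yields the claimed $\int S(t+y)f_X(y)\,dy+O(1)$.
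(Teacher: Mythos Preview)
Your skeleton is sound through the integration by parts and the splitting $\phi=\phi_{\mathrm{sing}}+\phi_{\mathrm{reg}}$; in particular the identification $\phi_{\mathrm{reg}}(iy)=-i\log X\cdot f_X(y)$ is correct, and handling $\phi_{\mathrm{sing}}$ by shifting to the right and picking up the Hankel/branch contribution at $s=\tfrac12-it$ is fine. The difficulties lie in the final two steps. First, the truncation: there is no pointwise bound $\log\zeta(\tfrac12+iu)\ll\log T/\log_2 T$ on RH --- only $\log|\zeta|$ from above and $|S|$ enjoy such bounds, while $\log|\zeta|$ is $-\infty$ at every zero. Using the $L^1$-on-unit-intervals estimate (as in Lemma~\ref{trunc lem}) together with $f_X(y)\ll y^{-2}$ gives only $O(\log_2 T)$ for the tail of the $\log\zeta$-integral, not $O(1)$; the paper avoids this by truncating only after the integrand has been converted to $S$, which does have the pointwise bound.

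Second, and more seriously, the Hilbert-transform step is not justified. The Kramers--Kronig identity $\log|\zeta|=\mathcal H(\pi S)$ and the adjoint relation $\int(\mathcal H g)h=-\int g(\mathcal H h)$ both require the functions involved to lie in $L^2(\mathbb R)$ (or a comparable space), and neither $S(t+\cdot)$ nor $\log|\zeta(\tfrac12+i(t+\cdot))|$ does. What you really need is the vanishing (up to $O(1)$) of $\int_{\mathbb R}\log\zeta(\tfrac12-i(t+y))\,f_X(y)\,dy$, which one can try to get by pushing the $y$-contour into the upper half-plane where both factors are holomorphic (modulo the logarithmic singularity from the pole of $\zeta$ at $y=-t+i/2$); but this contour shift needs uniform control of $\log\zeta(\sigma+iu)$ for $\sigma\downarrow\tfrac12$, which is exactly the delicate point. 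The paper sidesteps the whole issue: it keeps the $s$-contour at $\Re s=1/\log X$, invokes Titchmarsh's representation $\log\zeta(s+\tfrac12+it)=i\int_Y^{Y'}\frac{S(y)}{s+i(t-y)}\,dy+O(\cdot)$ valid \emph{off} the critical line, interchanges integrals, and then evaluates the resulting $s$-integral by residues at $s=0$ and $s=-i(t-y)$. That residue computation produces $f_X$ directly and lands on an $S$-integral, which can then be truncated using $|S|\ll\log T/\log_2 T$. If you want to rescue your route, replace the Hilbert-transform paragraph with a rigorous contour shift in $y$ for the conjugate integral, and postpone the truncation until after you have reached the $S$-integral.
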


\begin{proof}
By Mellin inversion we have 
\[
\sum_{n\leqs X}\frac{\Lambda(n)}{n^{1/2+it}}(1-\tfrac{\log n}{\log X}) 
=
\frac{1}{2\pi i \log X}\int_{(c)}-\frac{\zeta^\prime}{\zeta}(s+\tfrac12+it)  \frac{X^s}{s^2}ds
\]
for any $c>1/2$. For brevity let $\log X=\mc{L}$. Shifting to the line with real part $1/\mc{L}$ we encounter a pole at $s=1/2-it$ and the above is 
\begin{equation}\label{log der 1}
\frac{1}{2\pi i \mc{L}}\int_{(1/\mc{L})}-\frac{\zeta^\prime}{\zeta}(s+\tfrac12+it)  \frac{X^s}{s^2}ds+O\Big(\frac{X^{1/2}}{t^2\mc{L}}\Big). 
\end{equation}
By Theorem 9.6(A) of \cite{T} we have 
\[
-\frac{\zeta^\prime}{\zeta}(z)=\sum_{|\gamma-\Im(z)|\leqs 1}\frac{1}{z-\tfrac12-i\gamma}+O(\log(2+|\Im(z)|)) 
\]
and consequently $(\zeta^\prime/\zeta)(s+\tfrac12+it)\ll \mc{L}\cdot\log(|s|+t)$ for $\Re(s)=1/\mc{L}$. Thus, the expression in \eqref{log der 1} is 
\begin{align*}
&
\frac{1}{2\pi i \mc{L}}\int_{1/\mc{L}-iT/2}^{1/\mc{L}+iT/2}-\frac{\zeta^\prime}{\zeta}(s+\tfrac12+it)  \frac{X^s}{s^2}ds
+O\Big(\frac{(\log T)^2}{T}\Big)
+O\Big(\frac{X^{1/2}}{t^2\mc{L}}\Big).
\end{align*}

From Theorem 14.14 (B) of \cite{T} we have 
\begin{equation}\label{log zeta}
\log\zeta(\sigma+i\tau)\ll \frac{\log \tau}{\log_2\tau}\log\Big(\frac{2}{(\sigma-1/2)\log_2\tau}\Big),\qquad \tfrac{1}{2}<\sigma\leqs \tfrac12+C/\log_2\tau
\end{equation}
for large $\tau$.
Thus on integrating by parts the above integral is given by 
\begin{equation}\label{log der 2}
\frac{1}{2\pi i \mc{L}}\int_{1/\mc{L}-iT/2}^{1/\mc{L}+iT/2}\log{\zeta}(s+\tfrac12+it) \frac{d}{ds}\bigg[\frac{X^s}{s^2}\bigg]ds
+
O\Big(\frac{(\log T)^2}{T}\Big).
\end{equation}

Now, formula (14.10.5) of \cite{T} with $\alpha\to1/2^+$ gives 
\[
\log\zeta(s+\tfrac{1}{2}+it)
=
i\int_{Y}^{Y^\prime}\frac{S(y)}{s+i(t-y)}dy+O\Big(\frac{\log Y}{\Im(s)+t-Y}\Big)+O\Big(\frac{\log Y^\prime}{Y^\prime-(t+\Im(s))}\Big).
\]
for $Y<\Im(s)+t<Y^\prime$. Choosing $Y=T/3$ and $Y^\prime=3T$ the above error terms are $\ll \log T/T$ and so \eqref{log der 2} is 
\[
i\int_{T/3}^{3T}S(y) I(t-y)dy
+
O\Big(\frac{(\log T)^2}{T}\Big).
\]
where 
\[
I(t-y)=\frac{1}{2\pi i \mc{L}}\int_{1/\mc{L}-iT/2}^{1/\mc{L}+iT/2}\frac{1}{s+i(t-y)}\frac{d}{ds}\bigg[\frac{X^s}{s^2}\bigg]ds.
\]
Extending the integrals to $\pm i\infty$ incurs an error $\ll T^{-2}\log T$. Then, shifting the contour to the left we pick up poles at $s=-i(t-y)$ and $s=0$ to find 
\begin{align*}
I(t-y)
= &
\frac{1}{\mc{L}}(\mathrm{Res}_{s=-i(t-y)}+\mathrm{Res}_{s=0})\bigg[\frac{X^s(s\mc{L}-2)}{s^3(s+i(t-y))}\bigg]+O\Big(\frac{\log T}{T^2}\Big)
\\
= &
\frac{1}{\mc{L}}\bigg(\frac{X^{-i(t-y)}(-i(t-y)\mc{L}-2)}{(-i(t-y))^3}+\frac{1}{2!}\frac{d^2}{ds^2}\frac{X^s(s\mc{L}-2)}{s+i(t-y)}\bigg|_{s=0}\bigg)+O\Big(\frac{\log T}{T^2}\Big)
\\
= &
\frac{1}{\mc{L}}\bigg(\frac{X^{-i(t-y)}(-i(t-y)\mc{L}-2)}{(-i(t-y))^3}+\frac{\mc{L}}{(i(t-y))^2}-\frac{2}{(i(t-y))^3}\bigg)+O\Big(\frac{\log T}{T^2}\Big)
\\
= &
\frac{i(t-y)\mc{L}-2+X^{-i(t-y)}(i(t-y)\mc{L}+2)}{\mc{L}(i(t-y))^3}+O\Big(\frac{\log T}{T^2}\Big).
\end{align*}
Substituting $y\mapsto y+t$ and noting that $iI(-y)$ is given by $f_X(y)$ plus a negligible error term we find 
\[
\sum_{n\leqs X}\frac{\Lambda(n)}{n^{1/2+it}}(1-\tfrac{\log n}{\log X}) 
=
\int_{T/3-t}^{3T-t}S(t+y)f_X(y)dy+O\Big(\frac{(\log T)^2}{T}\Big).
\]
Truncating the integral at $\pm \log T/\log_2 T$ incurs an error $\ll 1$ since $S(t+y)\ll \log T/\log_2 T$ in the range of integration and $f_X(y)\ll y^{-2}$ for $y\gg 1$. The result then follows.
\end{proof}

\section{$2k$th Moment formulae: Proof of Theorem \ref{moments thm}} 

\subsection{The $2k$th moment of the real and imaginary parts} We consider the real part since the case of the imaginary part is analogous. Suppose $t\in [T,2T]$ and $T^{\epsilon}\leqs X\leqs T^{2m}$. Since $V_X$ is an even function, $\widehat{V}_X(y)$ is real and hence  
\begin{equation}\label{sum form real}
\Re\sum_{n\geqs 2}\frac{\Lambda(n)V_X(\log n)}{n^{1/2+it}\log n}
=
\frac{1}{2\pi}\int_{-1}^1\log|\zeta(\tfrac12+i(y+t))|\widehat{V}_X(y)dy
+
O(1)
\end{equation}
by Proposition \ref{key prop}.
 Then for even $2k$, by H\"older's inequality we have 
\begin{equation}\label{2kth}
\int_T^{2T} \bigg(\mathfrak{R}\sum_{n\leqs X}\frac{\Lambda(n)V_X(\log n)}{n^{1/2+it}\log n}\bigg)^{2k}dt 
=
J_k(T)+O\Big(C^{2k}J_k(T)^{1-\tfrac{1}{2k}}T^{1/2k}\Big)  
\end{equation}
where 
\[
J_k(T)
=
\int_T^{2T}
\bigg(\frac{1}{2\pi}\int_{-1}^{1}\log|\zeta(\tfrac12+i(t+y))|\widehat{V}_X(y)dy
\bigg)^{2k}dt. 
\]
The result for the real part with $2k$ even will then follow from the subsequent proposition. 
\begin{prop}\label{J prop} For $k\in\tfrac12\mb{N}$ and fixed  $0<\theta<1/100$ we have 
\[
J_k(T)
=
c_k T\bigg(\sum_{p\leqs T^{\theta/k}}\frac{V_X(\log p)^2}{p}\bigg)^k+O(T(Ck)^{4k}(\log\log T)^{k-1/2}) 
\]
where 
\[
c_k=
\begin{cases}
\frac{(2k)!}{2^{2k}k!} & 2k \text{ is even, }\\
0 & 2k \text{ is odd. }
\end{cases}
\]
\end{prop}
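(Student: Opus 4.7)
The plan is to reduce $J_k(T)$ to the $2k$-th moment of a short prime polynomial of length $Y:=T^{\theta/k}$, evaluate that via Montgomery--Vaughan, and extract the Gaussian constant $c_k$. The short prime sum arises from Soundararajan's approximation of $\log|\zeta(\tfrac12+i\tau)|$, and the weights $V_X(\log p)$ emerge from Fourier inversion of $\widehat{V}_X$ against $e^{-iy\log p}$.

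First, on RH combine Soundararajan's pointwise upper bound with a matching lower bound (via the explicit formula and control of zeros near $\tau$) to write, for $\tau\in[T,2T]$ outside an exceptional set $\mathcal{E}$ of measure $\ll T\exp(-(\log T)^{1/10})$,
\[
\log|\zeta(\tfrac12+i\tau)|=\Re\sum_{p\leqs Y}\frac{1}{p^{1/2+i\tau}}+\tfrac12\Re\sum_{p\leqs\sqrt{Y}}\frac{1}{p^{1+2i\tau}}+E(\tau),
\]
with $\|E\|_{L^{2k}([T,2T])}$ absorbable into the target error. Apply this uniformly in $\tau=t+y$, $|y|\leqs 1$, enlarging $\mathcal{E}$ to $\mathcal{E}^\ast$ of comparable measure to absorb the shift, and substitute into the inner integral of $J_k$ on the good set. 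Then by Fourier inversion, for each prime $p$,
\[
\frac{1}{2\pi}\int_{-1}^{1}\widehat{V}_X(y)\cos(y\log p)\,dy=V_X(\log p)+O\Big(\frac{1}{(1+\log^2 p)\log X}\Big),
\]
since $\int_{|y|>1}|\widehat{V}_X(y)|dy\ll 1/\log X$ by property (\ref{f cond}). Thus on $[T,2T]\setminus\mathcal{E}^\ast$ the integrand of $J_k$ becomes $\Re\sum_{p\leqs Y}V_X(\log p)/p^{1/2+it}$ plus an analogous prime-square term and residual Dirichlet polynomials whose $L^{2k}$ norms are controlled by Montgomery--Vaughan or hypercontractive estimates.

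Next, compute the $2k$-th moment of this short prime sum by expanding $(\Re Z)^{2k}=2^{-2k}\sum_{j=0}^{2k}\binom{2k}{j}Z^{j}\overline{Z}^{2k-j}$ and applying Montgomery--Vaughan termwise. Each resulting Dirichlet polynomial has length $\leqs Y^{2k}=T^{2\theta}\leqs T^{1/50}\ll T$, so the Montgomery--Vaughan error is negligible. The diagonal contribution --- perfect pairings of primes between the $Z$-factors and $\overline Z$-factors --- yields $c_kT\big(\sum_{p\leqs Y}V_X(\log p)^2/p\big)^k$ with $c_k=(2k)!/(2^{2k}k!)$ counting such matchings, and $c_k=0$ for odd $2k$ since no valid pairing exists. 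The remaining contributions (primes of multiplicity $\geqs 2$ within a summand, prime-square cross terms, the Fourier-truncation error, and $E(\tau)$) combine to $O(T(Ck)^{4k}(\log\log T)^{k-1/2})$ via standard multinomial counting and mean-square bounds for $\sum_p p^{-1-2it}$. The bad-set contribution $\int_{\mathcal{E}^\ast}(\cdots)^{2k}dt$ is negligible by Soundararajan's large-deviation bound $\mu\{\tau\in[T,2T]:|\log|\zeta(\tfrac12+i\tau)||\geqs V\sqrt{\log\log T}\}\ll T\exp(-V^2/2)$ together with $|\widehat{V}_X(y)|\ll\log X$.

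The main obstacle is the first step: Soundararajan's bound is a one-sided pointwise inequality, so obtaining a matching lower bound with small $L^{2k}$ residual uniformly in the shift $y\in[-1,1]$ requires a careful explicit-formula analysis of the zero sum $\sum_{|\gamma-\tau|\ll 1}$ in the spirit of Radziwi\l\l--Soundararajan, with parameters tuned to $Y=T^{\theta/k}$ and the measure of the exceptional set tracked throughout the computation.
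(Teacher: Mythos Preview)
Your outline from Fourier inversion onward --- recovering $V_X(\log p)$ from $\widehat{V}_X$, expanding the $2k$-th power, and counting pairings to extract $c_k$ --- is correct and matches the paper. The gap is Step~1, where you propose to approximate $\log|\zeta(\tfrac12+i\tau)|$ by $\Re\sum_{p\leqs Y}p^{-1/2-i\tau}$ pointwise outside a small exceptional set, by combining Soundararajan's upper bound with a matching lower bound. As you yourself flag, this is the main obstacle: Soundararajan's inequality is one-sided, and near zeros $\log|\zeta|\to-\infty$, so any lower bound with $L^{2k}$ residual $\ll T(ck)^{4k}$ requires controlling the zero sum in an explicit formula --- a result of Selberg--Tsang strength that you do not actually supply. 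The exceptional-set mechanism and large-deviation input you describe do not obviously produce the stated $k$-dependence either.

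The paper avoids this detour by citing Tsang's thesis (Theorem~5.1), which gives directly
\[
\int_T^{2T}\bigg|\Re\log\zeta(\tfrac12+i(t+y))-\Re\sum_{p\leqs Y}\frac{1}{p^{1/2+i(t+y)}}\bigg|^{2k}dt\ll T(ck)^{4k}
\]
uniformly in $|y|\leqs 1$: no exceptional set, no separate prime-square correction, and the exponent $4k$ in the final error is inherited verbatim from here. With this in hand the paper expands $J_k(T)$ as a $2k$-fold $y$-integral, writes each factor $\log|\zeta|$ as the prime sum plus the difference, and uses H\"older together with Tsang's bound and the prime-sum moment bound \eqref{p sum bound} to dispose of all cross terms. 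For the main term it applies $\cos((t+y_j)\log p_j)=\cos(t\log p_j)\cos(y_j\log p_j)-\sin(t\log p_j)\sin(y_j\log p_j)$, noting the $\sin$ piece vanishes since $\widehat{V}_X$ is even; this decouples the $t$-integral (evaluated via Lemma~4 of \cite{R dev}) from the $y_j$-integrals (Fourier inversion), the reverse order to your sketch but to the same effect.
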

For odd $2k$ we  simply compute the left hand side of \eqref{2kth} as $=J_k(T)+O(C^{2k}|J_{k-1/2}|)$. The result then also follows in this case since $J_{k-1/2}(T)\ll T(ck)^{4k}(\log\log T)^{k-1/2}$ by the above proposition and the fact that $V_X(\log p)\ll 1$.

\begin{proof}[Proof of Proposition \ref{J prop}]  We have 
\[
J_k(T)
=
\frac{1}{(2\pi)^{2k}}\int_{[-1,1]^{2k}}\int_T^{2T}\prod_{j=1}^{2k} \log|\zeta(\tfrac12+i(t+y_j))| dt\prod_{j=1}^{2k}\widehat{V}_X(y_j)dy_j. 
\]
Write each term in the product of the inner integral as 
\begin{equation*}\label{log decomp}
\bigg(\log|\zeta(\tfrac12+i(t+y_j))|-\Re\sum_{p\leqs Y}\frac{1}{p^{1/2+i(t+y_j)}}\bigg)+\Re\sum_{p\leqs Y}\frac{1}{p^{1/2+i(t+y_j)}}
\end{equation*}
with $Y=T^{\theta/k}$ for some $0<\theta<1/100$. After accounting for the small shift $y_j\ll1$, Theorem 5.1 of Tsang's thesis \cite{Tsang thesis} gives
\begin{equation}\label{Tsang diff}
\int_T^{2T}\bigg|\mf{F}\log\zeta(\tfrac12+i(t+y_j))-\mf{F}\sum_{p\leqs Y}\frac{1}{p^{1/2+i(t+y_j)}}\bigg|^{2k}dt\ll T(ck)^{\alpha_\mf{F}k}
\end{equation}
for any $k\in\mathbb{N}$ where $\alpha_\Re=4$ and $\alpha_\Im=2$. By H\"older's inequality this extends to real $k>0$. Thus after expanding the product and applying H\"older's inequality in the form $\int\prod_{j=1}^{2k} f_j\ll \prod_{j=1}^{2k}(\int |f_j|^{2k})^{1/2k}$ we have 
\begin{multline*}
J_k(T)
=
\frac{1}{(2\pi)^{2k}}\int_{[-1,1]^{2k}}\int_T^{2T}\prod_{j=1}^{2k}\Re\sum_{p\leqs Y}\frac{1}{p^{1/2+i(t+y_j)}} dt\prod_{j=1}^{2k}\widehat{V}_X(y_j)dy_j
\\
+O(T(ck)^{4k}(\log\log T)^{k-1/2})
\end{multline*}
since $\int_{-1}^1|\widehat{V}_X(y)|dy\ll 1$ and 
\begin{equation}\label{p sum bound}
\int_T^{2T}\bigg|\sum_{p\leqs Y}\frac{1}{p^{1/2+i(t+y)}}\bigg|^{2k}dt\ll T(ck\log\log T)^k
\end{equation}
for all $y\in\mathbb{R}$ and real $k>0$; the integer case following by Lemma \ref{moments lem} below, for example, and  the non-integer case by H\"older's inequality again.

The main term is given by 
\[
\sum_{p_1,\cdots, p_{2k}\leqs Y}\frac{1}{(p_1\cdots p_{2k})^{1/2}}
\frac{1}{(2\pi)^{2k}}\int_{[-1,1]^{2k}}\int_T^{2T}\prod_{j=1}^{2k} \cos((t+y_j)\log p_j)dt\prod_{j=1}^{2k}\widehat{V}_X(y_j)dy_j.
\]
Applying the identity 
\[
\cos((t+y_j)\log p_j)=\cos(t\log p_j)\cos(y_j\log p_j)-\sin(t\log p_j)\sin(y_j\log p_j)
\]
 for each factor and then expanding the product into a sum, we see that any term involving a factor of $\sin(y_j \log p_j)$ vanishes since $\widehat{V}_X(y)$ is an even function. Thus the above is 
\[
\sum_{p_1,\cdots, p_{2k}\leqs Y}\frac{1}{(p_1\cdots p_{2k})^{1/2}}
\frac{1}{(2\pi)^{2k}}\int_{[-1,1]^{2k}}\int_T^{2T}\prod_{j=1}^{2k} \cos(t\log p_j)dt\prod_{j=1}^{2k}\cos(y_j\log p_j)\widehat{V}_X(y_j)dy_j.
\]

Now, by Lemma 4 of \cite{R dev} we have
\[
\int_T^{2T}\prod_{j=1}^{2k} \cos(t\log p_j)dt
=
T f(p_1\cdots p_{2k})+O((2Y)^{2k})
\]
where $f$ is the multiplicative function defined by $f(p^\alpha)=\frac{1}{2^\alpha}\binom{\alpha}{\alpha/2}$ with the convention that the binomial coefficient is zero if $\alpha/2$ is not an integer. After summing over the $p_j$, the error term here contributes $\ll (cY)^{3k}\ll c^{3k}T^{3\theta}$ which is negligible. From the properties of $\widehat{V}_X$ we have 
\begin{align*}
\frac{1}{2\pi}\int_{-1}^1 \cos(y\log p)\widehat{V}_X(y)dy
= &
\frac{1}{2\pi }\int_{-\infty}^\infty \cos(y\log p)\widehat{V}_X(y)dy+O(1/\log X)
\\
= &
 V_X(\log p)+O(1/\log X)
\end{align*}
by Fourier inversion since $V_X$ is real. Aside from a negligible error term, our main term is then
\[
T\sum_{p_1,\cdots, p_{2k}\leqs Y}\frac{f(p_1\cdots p_{2k})\prod_{j=1}^{2k}V_X(\log p)}{(p_1\cdots p_{2k})^{1/2}}.
\]

Since $f(n)$ is supported on squares, if $2k$ is odd there is no main term so we may assume $2k$ is even. In this case, any pairing which results in a fourth power of a prime or higher gives a lower order term since  $\sum_{p\leqs Y}V_X(\log p)^4/p^{4\sigma}\ll1$ and note that there are at most $(2k)!$ such pairings. The main term arises from pairing the $2k$ primes into prime squares, which there are $(2k-1)!!=(2k!)/2^kk!$ ways to do, and since $f(p^2)=1/2$ we see that the above is 
\[
\frac{(2k)!}{2^{2k}k!}\bigg(\sum_{p\leqs Y}\frac{V_X(\log p)^2}{p}\bigg)^k+O((2k)^{2k}(\log\log T)^{k-1}). 
\]
\end{proof}  

\subsection{The absolute $2k$th moment} The case of the absolute moments follows similar lines. By  Proposition \ref{key prop} and H\"older's inequality we have 
\begin{equation}\label{2kth abs}
\int_T^{2T} \bigg|\sum_{n\leqs X}\frac{\Lambda(n)V_X(\log n)}{n^{1/2+it}\log n}\bigg|^{2k}dt 
=
\mc{J}_k(T)+O\Big(C^{2k}\mc{J}_k(T)^{1-\tfrac{1}{2k}}T^{1/2k}\Big)  
\end{equation}
where 
\[
\mc{J}_k(T)
=
\int_T^{2T}
\bigg|\frac{1}{2\pi}\int_{-1}^{1}\log\zeta(\tfrac12+i(t+y))\widehat{V}_X(y)dy
\bigg|^{2k}dt. 
\]
The result then follows from the following proposition. 

\begin{prop}For $k\in\mathbb{N}$ and fixed  $0<\theta<1/100$  we have 
\[
\mc{J}_k(T)
=
k! T\bigg(\sum_{p\leqs T^{\theta/k}}\frac{V_X(\log p)^2}{p}\bigg)^k+O(T(ck)^{4k}(\log\log T)^{k-1/2}). 
\]
\end{prop}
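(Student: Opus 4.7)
The approach mirrors that of Proposition \ref{J prop}, with the simplification that $|\cdot|^{2k}=(\cdot)^k\,(\overline{\cdot})^k$ factors cleanly into a product of $k$ integrals and $k$ conjugate integrals. Writing
\[
\mc{J}_k(T)=\frac{1}{(2\pi)^{2k}}\int_{[-1,1]^{2k}}\int_T^{2T}\prod_{j=1}^{k}\log\zeta(\tfrac12+i(t+y_j))\prod_{j=k+1}^{2k}\log\zeta(\tfrac12-i(t+y_j))\,dt\,\prod_{j=1}^{2k}\wh{V}_X(y_j)\,dy_j
\]
(noting that $\wh{V}_X$ is real and even), I would replace each $\log\zeta(\tfrac12\pm i(t+y_j))$ by the short prime sum $\sum_{p\leqs Y}p^{-1/2\mp i(t+y_j)}$ with $Y=T^{\theta/k}$. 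By the same combination of H\"older's inequality, Tsang's $L^{2k}$-bound \eqref{Tsang diff}, and the prime-sum moment estimate \eqref{p sum bound} as used in Proposition \ref{J prop}, the induced error is $O(T(ck)^{4k}(\log\log T)^{k-1/2})$.

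After replacement, expanding the product over primes gives
\[
\sum_{p_1,\ldots,p_{2k}\leqs Y}\frac{1}{\sqrt{p_1\cdots p_{2k}}}\int_T^{2T}\exp\!\Big(it\log\tfrac{p_{k+1}\cdots p_{2k}}{p_1\cdots p_k}\Big)dt\prod_{j=1}^{k}\frac{1}{2\pi}\int_{-1}^{1}p_j^{-iy_j}\wh{V}_X(y_j)\,dy_j\prod_{j=k+1}^{2k}\frac{1}{2\pi}\int_{-1}^{1}p_j^{+iy_j}\wh{V}_X(y_j)\,dy_j.
\]
The $t$-integral equals $T$ precisely on the diagonal $p_1\cdots p_k=p_{k+1}\cdots p_{2k}$ and is $\ll 1/|\log(\cdots)|$ otherwise; the off-diagonal contribution sums to $O((cY)^{3k})=O(T^{3\theta})$ and is negligible, exactly as in the corresponding step of Proposition \ref{J prop}. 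For each $y_j$-integral, Fourier inversion together with condition (\ref{f cond}) and the evenness of $\wh{V}_X$ yield
\[
\frac{1}{2\pi}\int_{-1}^{1}p^{\pm iy}\wh{V}_X(y)\,dy=V_X(\log p)+O(1/\log X).
\]

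By unique factorization, the diagonal condition forces $(p_{k+1},\ldots,p_{2k})$ to be a permutation of $(p_1,\ldots,p_k)$ as a multiset. Configurations in which some prime appears with multiplicity $\geqs 2$ in $(p_1,\ldots,p_k)$ lose a factor $\sum_p V_X(\log p)^{2r}/p^{r}\ll 1$ for each such coincidence and thus contribute at most $O(T(2k)^{2k}(\log\log T)^{k-1})$. The main term comes from $k$-tuples of distinct primes, each admitting exactly $k!$ matching permutations; each pair matches the same prime in one $-iy$ factor and one $+iy$ factor, producing $V_X(\log p_j)^2/p_j$. Summing yields
\[
k!\,T\sum_{\substack{p_1,\ldots,p_k\leqs Y\\\text{distinct}}}\prod_{j=1}^{k}\frac{V_X(\log p_j)^2}{p_j}=k!\,T\bigg(\sum_{p\leqs Y}\frac{V_X(\log p)^2}{p}\bigg)^{k}+O(T(\log\log T)^{k-1}),
\]
which gives the proposition.

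The only real obstacle is bookkeeping of the diagonal combinatorics, and here it is milder than in Proposition \ref{J prop}: the absolute moment matches primes across the holomorphic and anti-holomorphic groups rather than requiring them to pair into prime squares, so there is no parity issue and the constant $k!$ drops out directly as $|S_k|$ without the binomial gymnastics of the real-part case.
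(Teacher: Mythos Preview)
Your argument is correct and follows essentially the same route as the paper's own proof: expand the $2k$th power, replace each $\log\zeta$ by the short prime sum via Tsang's bound and H\"older, then isolate the diagonal $p_1\cdots p_k=p_{k+1}\cdots p_{2k}$ and extract the $k!$ from permutations. The only cosmetic difference is that the paper explicitly splits the difference $D_j(t)$ into real and imaginary parts (since \eqref{Tsang diff} is stated for $\mathfrak{F}=\Re$ or $\Im$ separately) before applying H\"older, whereas you invoke \eqref{Tsang diff} directly; this is harmless since $\int|D_j|^{2k}\leqs 2^{2k-1}(\int|\Re D_j|^{2k}+\int|\Im D_j|^{2k})$.
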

\begin{proof}
We shall only give a sketch since this follows similarly to Proposition \ref{J prop}. We first expand the $2k$th power and push the integral over $t$ through. We then write  
\[
\log\zeta(\tfrac12+i(t+y_j))=\sum_{p\leqs Y}\frac{1}{p^{1/2+i(t+y_j)}}+\Re D_j(t)+\Im D_j(t).  
\] 
where $D_j(t)$ is the difference between the logarithm  and the prime sum. Applying this for each term in the integrand and expanding the product into a sum gives $3^{2k}-1$ error terms involving either the real or imaginary part of $D_j(t)$. These can be dealt as before by using H\"older's inequality, \eqref{Tsang diff} and \eqref{p sum bound} to give an error $\ll T(ck)^{4k}(\log\log T)^{k-1/2}.$
In this way we find 
\begin{multline*}
\mc{J}_k(T)
=
\frac{1}{(2\pi)^{2k}}\int_{[-1,1]^{2k}}\int_T^{2T}\prod_{j=1}^{k}\sum_{p\leqs Y}\frac{1}{p^{1/2+i(t+y_j)}}\prod_{j={k+1}}^{2k}\sum_{p\leqs Y}\frac{1}{p^{1/2-i(t+y_j)}}
 dt\prod_{j=1}^{2k}\widehat{V}_X(y_j)dy_j
\\
+O(T(ck)^{4k}(\log\log T)^{k-1/2}).
\end{multline*}
By distinguishing diagonal and off-diagonal terms the inner integral is
\[
T\sum_{\substack{p_j\leqs Y\\ p_1\cdots p_k=p_{k+1}\cdots p_{2k}}}
\frac{\prod_{j=1}^k p_j^{-iy_j}p_{j+k}^{iy_{j+k}}}{(p_1\cdots p_{2k})^{1/2}}
+
O(Y^{3k})
\] 
since $\sum_{p\leqs Y}p^{-1/2}\ll Y^{1/2}$ and $\int_T^{2T}(p_1\cdots p_k/p_{k+1}\cdots p_{2k})^{-it}dt \ll Y^{2k}$ if $p_1\cdots p_k\neq p_{k+1}\cdots p_{2k}$.
Extending the integrals over $y_j$ to $\pm \infty$ introduces a negligible error and then computing these integrals as before we find that 
\begin{align*}
\mc{J}_k(T)
= &
T\sum_{\substack{p_j\leqs Y\\ p_1\cdots p_k=p_{k+1}\cdots p_{2k}}}
\frac{\prod_{j=1}^k V_X(\log p_j)}{(p_1\cdots p_{2k})^{1/2}}+O(T(ck)^{4k}(\log\log T)^{k-1/2})
\\
= &
Tk!\bigg(\sum_{p\leqs Y}\frac{V_X(\log p)^2}{p}\bigg)^k+O(T(ck)^{4k}(\log\log T)^{k-1/2})
\end{align*}
and so the result follows.
\end{proof}


\section{Large deviations: proof of Theorem \ref{large dev thm}}

Before proving Theorem \ref{log der thm} we give the large deviation estimates since the arguments are relatively short. Throughout we make use of the following standard lemma. 

\begin{lem}\label{moments lem} Let $T$ be large and $2\leqs x\leqs T$. Then for $k$ such that $x^k\leqs T/\log T$ and any complex numbers $a(p)$ we have 
\[
\int_T^{2T}\bigg|\sum_{p\leqs x}\frac{a(p)}{p^{1/2+it}}\bigg|^{2k}dt\ll Tk!\bigg(\sum_{p\leqs x}\frac{|a(p)|^2}{p}\bigg)^k.
\]
\end{lem}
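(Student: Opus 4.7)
The plan is to reduce the $2k$-th moment of the prime Dirichlet polynomial to an ordinary second moment via the trivial identity $|z|^{2k}=|z^k|^2$, apply a sharp mean-value theorem (Montgomery--Vaughan), and then estimate the resulting arithmetic sum by the multinomial theorem.

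First I would expand
\[
\left|\sum_{p\leqs x}\frac{a(p)}{p^{1/2+it}}\right|^{2k}
=
\left|\sum_{n\leqs x^k}\frac{b(n)}{n^{1/2+it}}\right|^{2},
\]
where $b(n)=\sum_{p_1\cdots p_k=n}a(p_1)\cdots a(p_k)$ is supported on integers with at most $k$ prime factors (counted with multiplicity), all of which are $\leqs x$. If $n=p_1^{\alpha_1}\cdots p_r^{\alpha_r}$ with $\alpha_1+\cdots+\alpha_r=k$, then $b(n)=\binom{k}{\alpha_1,\ldots,\alpha_r}a(p_1)^{\alpha_1}\cdots a(p_r)^{\alpha_r}$.

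Next I would invoke the Montgomery--Vaughan mean value theorem to obtain
\[
\int_T^{2T}\left|\sum_{n\leqs x^k}\frac{b(n)}{n^{1/2+it}}\right|^{2}dt
=
\bigl(T+O(x^k)\bigr)\sum_{n\leqs x^k}\frac{|b(n)|^2}{n}.
\]
The hypothesis $x^k\leqs T/\log T$ guarantees that $T+O(x^k)\ll T$, so everything collapses to bounding $\sum_n |b(n)|^2/n$. This is where the main arithmetic content sits, but it is quite clean.

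To bound the arithmetic sum I would use the crude inequality $\binom{k}{\alpha_1,\ldots,\alpha_r}^{2}\leqs k!\binom{k}{\alpha_1,\ldots,\alpha_r}$, which follows because a multinomial coefficient is at most $k!$. This gives
\[
\sum_{n}\frac{|b(n)|^2}{n}
\leqs
k!\sum_{\alpha_1+\cdots+\alpha_r=k}\binom{k}{\alpha_1,\ldots,\alpha_r}\prod_i\frac{|a(p_i)|^{2\alpha_i}}{p_i^{\alpha_i}}
=
k!\left(\sum_{p\leqs x}\frac{|a(p)|^2}{p}\right)^{k},
\]
by the multinomial theorem applied to the right-hand side. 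Combining these steps yields the claimed bound. There is no real obstacle here; the only subtlety is confirming that the length $x^k$ of the associated Dirichlet polynomial matches the hypothesis of Montgomery--Vaughan in the form $x^k\leqs T$, which is precisely ensured by the assumption $x^k\leqs T/\log T$.
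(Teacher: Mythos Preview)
Your proof is correct and is precisely the standard argument: the paper itself does not give a proof but refers to Lemma~3 of Soundararajan~\cite{Sound}, and what you have written is exactly that argument (expand the $k$th power, apply Montgomery--Vaughan, then bound the multinomial coefficients by $k!$ times a single multinomial coefficient and sum via the multinomial theorem). There is nothing to add.
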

\begin{proof}
See Lemma 3 of \cite{Sound} for example.
\end{proof}

\subsection{The real part} Let us consider the real part first and to simplify things we normalise so that $V(0)=1$. Taking real parts in  Proposition \ref{key prop} we have 
\[
\Re\sum_{n\geqs 2}\frac{\Lambda(n)V_X(\log n)}{n^{1/2+it}\log n}
=
\frac{1}{2\pi}\int_{-1}^1\log|\zeta(\tfrac12+i(y+t))|\widehat{V}_X(y)dy
+
O(1).
\]
since $V_X(x)$ is even and hence $\widehat{V}_X(y)$ is real. If $\wh{V}_X(y)$ is positive then from the bound $\log|\zeta(\tfrac12+it)|\leqs (\tfrac{\log 2}{2}+o(1))\tfrac{\log t}{\log\log t}$ of \cite{CS}, the right hand side is 
\[
\leqs (1+o(1))\frac{\log 2}{2}\frac{\log t}{\log\log t}
\]
for large $t$ by Fourier inversion since $V(0)=1$. Thus, throughout we may assume that 
\[
10\sqrt{\log\log T}\leqs W \leqs \frac{\log 2}{2}\frac{\log T}{\log\log T}.
\] 

We now apply the method of Soundararajan \cite{Sound} to find the frequency of large values of $\log|\zeta(\tfrac12+i(y+t))|$ and hence of our Dirichlet polynomial.
Define the parameter 
\begin{equation}
\label{A}
A
=
\begin{cases}
\tfrac12 \log_3 T &\text{ if } W\leqs \log\log T
\\
\frac{\log\log T}{2W}\log_3 T &\text{ if } \log\log T\leqs W\leqs \tfrac12 \log\log T\log_3 T
\\
1 &\text{ if } W\geqs \tfrac12 \log\log T\log_3 T.
\end{cases}
\end{equation}
and set $Y=T^{A/W}$ and $Z=Y^{1/\log\log T}$. Then from Section 3 of \cite{Sound} we have  for large $t\in [T,2T]$,  
\[
\log|\zeta(\tfrac12+i(y+t))|\leqs f_1(t+y)+f_2(t+y)+(\tfrac{3}{4}-\epsilon)\tfrac{W}{A}+O(\log_3 T) 
\]
 where 
\[
f_1(t)=\bigg|\sum_{p\leqs Z}\frac{1}{p^{1/2+1/2\log Y+it}}\frac{\log(Y/p)}{\log Y}\bigg|
\]
and 
\[
f_2(t)=\bigg|\sum_{Z< p\leqs Y}\frac{1}{p^{1/2+1/2\log Y+it}}\frac{\log(Y/p)}{\log Y}\bigg|.
\]
Consequently, by positivity of $\wh{V}_X(y)$, 
\[
\Re\sum_{n\geqs 2}\frac{\Lambda(n)V_X(\log n)}{n^{1/2+it}\log n}
\leqs
g_1(t)+g_2(t)+(\tfrac{3}{4}-\epsilon)\tfrac{W}{A}+O(\log_3 T) 
\]
for large $t\in [T,2T]$ where 
\[
g_j(t)=\frac{1}{2\pi }\int_{-1}^1f_j(t+y)\widehat{V}_X(y)dy.
\]

Thus, if our sum is greater than $W$ we must have either 
\[
g_1(t)\geqs W\Big(1-\frac{7}{8A}\Big)=:W_1 \qquad \text{ or } \qquad  g_2(t)\geqs \frac{W}{8A}  
\]
We determine the frequency of these conditions by using Chebyshev's inequality along with high moment bounds. The only difference with \cite{Sound} is that we have to deal with the extra integral over $y$. This is easily dealt with however using H\"older's inequality in the form
\begin{align*}
\int_T^{2T}g_j(t)^{2k}dt
= &
\int_T^{2T} \bigg(\frac{1}{2\pi }\int_{-1}^1 f_j(t+y)\wh{V}_X(y)\bigg)^{2k}dt
\\
\leqs & 
 \frac{1}{2\pi }\int_{-1}^1 \Big(\int_T^{2T} f_j(t+y)^{2k}dt\Big) \wh{V}_X(y)dy
\end{align*}
using the fact that $V(0)=1$. By Lemma \ref{moments lem} we find that for all $y\in [-1,1]$,   
\[
\int_T^{2T} f_1(t+y)^{2k_1}dt\ll T k_1!\bigg(\sum_{p\leqs Z}\frac{1}{p}\bigg)^{k_1}\ll T\sqrt{k_1}\bigg(\frac{k_1\log\log T}{e}\bigg)^{k_1}
\]
provided $k_1\leqs \log (T/\log T)/\log Z$, and likewise
 \[
\int_T^{2T} f_2(t+y)^{2k_2}dt\ll T k_2!\bigg(\sum_{Z<p\leqs Y}\frac{1}{p}\bigg)^{k_2}\ll T(k_2\log_3T)^{k_2}
\] 
provided $k_2\leqs \log (T/\log T)/\log Y$.

Thus, choosing $k_2=W/A-1$ we see that the measure of the set of $t\in [T,2T]$ for which $g_2(t)\geqs W/8A$ is
\[
\ll T(W/8A)^{-2k_2}(k_2\log_3T)^{k_2}\ll T\exp(-\tfrac{W}{2A}\log W).
\]
Choosing $k_1=\lfloor W_1^2/\log\log T\rfloor$ if $W<(\log\log T)^2$ and $k_1=\lfloor 10W\rfloor$ when $W>(\log\log T)^2$ we find that the measure of the set of $t\in [T,2T]$ for which $g_1(t)\geqs W_1$ is
\[
\ll
 T\frac{W}{\sqrt{\log\log T}}\exp\bigg(-\frac{W_1^2}{\log\log T}\bigg)
+
T\exp(-4W\log W)
\]
after a short computation. This completes the proof regarding large positive values of the real part.

\subsection{The imaginary part} The imaginary part follows similar lines with some minor modifications which we now describe. We utilise the following formula of Selberg (Theorem 1, \cite{Sel S(t)}) valid for $t\geqs 2$ and $2\leqs x\leqs t^2$:
\[
S(t)
=
\frac{1}{\pi}\Im\sum_{n\leqs x^2}\frac{\Lambda_x(n)}{n^{1/2+1/\log x+it}\log n}
+
O\bigg(\frac{1}{\log x}\bigg|\sum_{n\leqs x^2}\frac{\Lambda_x(n)}{n^{1/2+1/\log x+it}}\bigg|\bigg)
+
O\bigg(\frac{\log t}{\log x}\bigg).
\]
where $\Lambda_x(n)=\Lambda(n)$ for $n\leqs x$ and equals $\Lambda(n)\log(x^2/n)/\log x$ for $x<n\leqs x^2$. From this we find that for $t\in[T/2,4T]$,  
\[
|S(t)| \leqs h_1(t)+h_2(t)+h_3(t)+h_4(t)+C_0\tfrac{\log T}{\log x}
\]
where 
\begin{align*}
h_1(t) 
= &
\frac{1}{\pi}\bigg|\sum_{p\leqs z}\frac{\Lambda_x(p)}{p^{1/2+1/\log x+it}\log p}\bigg|,
\qquad
h_2(t)
=
\frac{1}{\pi}\bigg|\sum_{z<p\leqs x^2}\frac{\Lambda_x(p)}{p^{1/2+1/\log x+it}\log p}\bigg|,
\end{align*}
with $z=x^{2/\log_2 T}$, and 
\begin{align*}
h_3(t) 
= &
\frac{1}{2\pi}\bigg|\sum_{p\leqs x}\frac{\Lambda_x(p^2)}{p^{1+2/\log x+2it}\log p}\bigg|,
\qquad
h_4(t)
=
\frac{B}{\log x}\bigg|\sum_{p\leqs x^2}\frac{\Lambda_x(p)}{p^{1/2+1/\log x+it}}\bigg|
\end{align*}
for some positive constants $B,C_0$. Then taking imaginary parts in Proposition \ref{key prop} and applying the trivial bound we find 
\[
\bigg|\Im\sum_{n\geqs 2}\frac{\Lambda(n)V_X(\log n)}{n^{1/2+it}\log n}\bigg|\leqs \sum_{j=1}^4 \ell_j(t) + C_0\tfrac{\log T}{\log x}
\]
where 
\[
\ell_j(t)=\frac{1}{2}\int_{-1}^1 h_j(t+y)\wh{V}_X(y)dy 
\]
again by positivity of $\wh{V}_X(y)$ and the normalisation $V(0)=1$.

Similarly to before we set $x^2=T^{A^\prime/W}$ where now
\begin{equation}
\label{A prime}
A^\prime
=
\begin{cases}
\tfrac12 \log_3 T &\text{ if } W\leqs \log\log T
\\
\frac{\log\log T}{2W}\log_3 T &\text{ if } \log\log T\leqs W\leqs \tfrac{1}{2(5+2C_0)} \log\log T\log_3 T
\\
5+2C_0 &\text{ if } W\geqs \tfrac{1}{2(5+2C_0)} \log\log T\log_3 T
\end{cases}
\end{equation}
and ask when either 
\[
\ell_1(t)\geqs W\bigg(1-\frac{4+2C_0}{A^{\prime}}\bigg) \qquad \text{ or }\qquad \ell_j(t)\geqs \frac{W}{A^\prime}
\]
for $j=2,3$ or $4$. We may now proceed as before to give the theorem in the case of the imaginary part. Here we note that the terms $\ell_3(t)$, $\ell_4(t)$ can be dealt with using Lemma \ref{moments lem}
along with the bounds 
\[
\sum_{p\leqs x}\frac{1}{p^2}\ll 1,\qquad \sum_{p\leqs x^2}\frac{\log^2 p}{p}\ll \log^2 x
\]
and a similar choice of parameters to $g_2(t)$ of the previous section.


\section{Correlations of $S(t)$} In order to prove Theorem \ref{log der thm} we require precise correlations of $S(t)$. As mentioned in the introduction, for these purposes we adapt a result of Goldston \cite{Goldston} which gives lower order terms in the mean square of $S(t)$. Our result is as follows. 

\begin{prop}\label{S corr prop}Assume RH. Then uniformly in $y_j$ satisfying  $|y_j|\leqs \log T$, we have 
\begin{equation}\begin{split}\label{S corr int 2}
\int_0^TS(t+y_1)S(t+y_2)dt 
= &
\frac{T}{2\pi^2}\int_{{\log 2}}^{\log T} \frac{\cos(u(y_1-y_2))}{u}du
\\
&+\frac{T}{2\pi^2}
\int_1^\infty \frac{F(u)\cos(u(y_1-y_2)\log T)}{u^2}du
+
\frac{T}{2\pi^2}c(y_1,y_2)
\\
&
+O\big(T \tfrac{(y_1-y_2)^2}{1+(y_1-y_2)^2}\big)+O(T|y_1-y_2|\log_2 T)+o(T)
\end{split}
\end{equation}
where $F(u)$ is given by \eqref{F} and 
\begin{multline}\label{c}
c(y_1,y_2)
=
\cos((y_1-y_2)\log 2)\Big(\log\log 2 + \gamma+\sum_{m=2}^\infty\sum_p \frac{1}{mp^m}\Big)
\\
+
\sum_{p}\sum_{m=2}^\infty\frac{\cos(m(y_2-y_1)\log p)}{m^2p^m}.
\end{multline}
\end{prop}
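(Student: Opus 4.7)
The plan is to adapt Goldston's calculation of the mean square of $S(t)$, which gives the case $y_1 = y_2 = 0$, so that the shift dependence is tracked through every term. The starting point is a Selberg-type explicit formula of the shape
\[
\pi S(t) = -\Im\sum_{n\leqs x^2}\frac{\Lambda_x(n)}{n^{1/2+1/\log x+it}\log n} + \mc{R}(t,x),
\]
where $\mc{R}(t,x)$ captures the remainder (prime squares, the secondary sum without the $\log n$, and an $O(\log t/\log x)$ term). I would choose $x$ a small power of $T$ so that the remainder's contribution to the shifted second moment is $o(T)$ uniformly in $|y_j|\leqs\log T$.

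Forming the product and using $\Im(A)\Im(B) = \tfrac12\Re(A\ol B) - \tfrac12\Re(AB)$, one integrates over $t\in[0,T]$. The $\Re(AB)$ piece oscillates in $t$ and contributes only to the error after routine estimates. The $\Re(A\ol B)$ piece splits into a diagonal $m=n$ piece and an off-diagonal $m\neq n$ piece. The diagonal contributes
\[
\frac{T}{2\pi^2}\sum_{n\leqs x^2}\frac{\Lambda_x(n)^2\cos((y_1-y_2)\log n)}{n\log^2 n},
\]
which, after separating the contribution of primes from prime powers and applying partial summation together with the prime number theorem in the form $\sum_{p\leqs y}(\log p)/p = \log y + M + o(1)$ where $M$ absorbs Mertens' constant, produces the integral $\frac{T}{2\pi^2}\int_{\log 2}^{\log T}\cos(u(y_1-y_2))/u\,du$ together with the prime-square double sum and the $(\gamma + \log\log 2 + \sum_p\sum_{m\geqs 2}1/(mp^m))\cos((y_1-y_2)\log 2)$ pieces comprising $c(y_1,y_2)$.

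The central and most delicate step is extracting the $F(u)$ integral from the off-diagonal contribution. Here I would follow Goldston's dual use of the explicit formula: one first inserts a test function of the Fej\'er-kernel type (to produce the weight $w(u) = 4/(4+u^2)$) and rewrites the off-diagonal sum as a sum over pairs of zeros weighted by $T^{i\alpha(\gamma_1-\gamma_2)}w(\gamma_1-\gamma_2)$ times a transform of $\cos((y_1-y_2)\cdot)$, which by the definition \eqref{F} is precisely $\int F(\alpha,T) \hat{r}(\alpha)\,d\alpha$ for a suitable kernel $\hat{r}$ determined by the shifts. The substitution $\alpha = u$ with the rescaling built into \eqref{F} converts the oscillation $\cos((y_1-y_2)\cdot)$ into $\cos(u(y_1-y_2)\log T)$, and the Fej\'er weight contributes the $1/u^2$ factor. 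The range $u\in[0,1]$ is handled separately via Montgomery's asymptotic \eqref{F 1}, which combines with the diagonal to match the claimed limits $[\log 2,\log T]$ and $[1,\infty)$ exactly.

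The main obstacle I anticipate is achieving the uniformity in $|y_j|\leqs\log T$ while keeping the unspecified error at $o(T)$: the remainder $\mc{R}(t+y_j,x)$ behaves slightly differently for shifted $t$, and the routine bounds on prime sums must be shown to depend on $y_1-y_2$ only through the stated terms $T(y_1-y_2)^2/(1+(y_1-y_2)^2)$ and $T|y_1-y_2|\log_2 T$. These arise from the tail of the diagonal integral where the $\cos$ is close to $1$, and from treating the contribution of primes $p$ with $|y_1 -y_2|\log p$ of moderate size, respectively. Everything else should reduce to the bookkeeping already present in Goldston's argument, carried through with the extra oscillatory factor $\cos((y_1-y_2)\log n)$ in place of $1$.
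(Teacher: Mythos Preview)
Your proposal misidentifies the origin of the $F(u)$ integral, and this is a genuine gap rather than a bookkeeping issue. In the paper (following Goldston exactly), the explicit formula used is not Selberg's $\Lambda_x$ approximation but the decomposition $S(t)=P_x(t)+Z_x(t)+\text{(tiny)}$, where $P_x$ is the weighted prime sum with $f(u)=\tfrac{\pi}{2}u\cot(\tfrac{\pi u}{2})$ and $Z_x(t)=\tfrac{1}{\pi}\sum_\gamma h((t-\gamma)\log x)$ is an \emph{explicit sum over zeros}. One then sets $L(T)=\int(S-P_x)(S-P_x)$ and computes it two ways: on one hand it equals $R(T)=\int Z_x(t+y_1)Z_x(t+y_2)\,dt$, which unfolds directly as a sum over pairs $(\gamma_1,\gamma_2)$ and, after inserting $w(\gamma_1-\gamma_2)$ and taking the Fourier transform of the kernel $k=|\widehat{h}|^2$, produces the $F(u)$ integral by the very definition \eqref{F}. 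On the other hand $L(T)=\int SS + G + H + H$, where $G=\int P_xP_x$ and $H=\int S\cdot P_x$. The off-diagonal of $G$ is $O(x^{2+\epsilon})$ and discarded; the cross term $H$ is handled by Titchmarsh's formula $\int_1^T S(t)\sin(t\log n)\,dt=-\tfrac{T}{2\pi}\Lambda(n)/(n^{1/2}\log n)+O(n^{3/2}\log T)$, which turns it back into a prime sum. Equating the two evaluations of $L(T)$ then solves for $\int SS$.

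Your plan instead tries to extract $F(u)$ from the \emph{off-diagonal of the prime sum} via a vague ``dual use of the explicit formula'' and a Fej\'er kernel. There is no mechanism by which the off-diagonal $m\neq n$ terms of $\sum_n \Lambda_x(n)n^{-1/2-it}$ become a zero-pair sum in this framework; in the actual argument those off-diagonals are simply $O(x^{2+\epsilon})$. The zero-pair structure is present from the outset in $Z_x$, not recovered from primes. Relatedly, your attribution of the error terms is off: the $O\big(T\tfrac{(y_1-y_2)^2}{1+(y_1-y_2)^2}\big)$ arises precisely when one inserts the factor $w(\gamma_1-\gamma_2)$ into the zero-pair sum in $R(T)$ (and bounding the discrepancy requires a nontrivial input, namely Fujii's estimate $\sum_{0<\gamma\leqs T}S(\gamma+a)\ll T\log T$), while the $O(T|y_1-y_2|\log_2 T)$ comes from the integration-by-parts term $I_2$ when converting the weighted prime sum to an integral via $T(u)=\sum_{p\leqs u}1/p$. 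Neither has to do with tails of the diagonal cosine integral.
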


We remark that the second error term in \eqref{S corr int 2} can be made explicit and is given by the integral in formula \eqref{I2} below. Since our arguments closely follow those of Goldston, at certain times we may only describe the main points, referring to \cite{Goldston} for details.  

As usual, we start with an approximate formula. Assuming RH, Lemma 1 and (2.12) of \cite{Goldston} give for $t\geqs 1$, $t\neq \gamma$ and $x\geqs 4$, 
\begin{equation}\label{approx form}
S(t)
=
P_x(t)+Z_x(t)
+
O(1/t(\log x)^2)
+
O(x^{1/2}/(t\log x)^2)
\end{equation}
where 
\begin{equation}
P_x(t)=-\frac{1}{\pi}\sum_{n\leqs x}\frac{\Lambda(n)\sin(t\log n)}{n^{1/2}\log n}f\big(\tfrac{\log n}{\log x}\big),\qquad 
f(u)
=
\frac{\pi }{2}u\cot\big(\tfrac{\pi u}{2}\big)
\end{equation}
and 
\begin{equation}
\,\,\,\,\,\,Z_x(t)=\frac{1}{\pi}\sum_\gamma h((t-\gamma)\log x),\qquad
h(v)=\sin(v)\int_0^\infty \frac{u}{u^2+v^2}\frac{du}{\sinh u}.
\end{equation}
Throughout we shall assume 
\[
\qquad\qquad x=T^\beta,
\]
for some fixed $0<\beta<1/2$, say. Noting that $S(t+y_j)$ is bounded by $\ll\log T$ for $|t|\leqs 1$ we may restrict to the integral over $[1,T]$. As usual, to avoid mixing the sums over primes and zeros, one considers  
\[
L(T):=\int_1^T \Big(S(t+y_1)-P_x(t+y_1)\Big)\Big(S(t+y_2)-P_x(t+y_2)\Big)dt 
\]
which by the above approximate formula is given by 
\[
R(T):=\int_1^T Z_x(t+y_1)Z_x(t+y_2)dt
\]
plus a negligible error which can be dealt with using the Cauchy--Schwarz inequality once we have a bound for $R(T)$.

\subsection{Integral of the sum over zeros} 
In this subsection we consider the integral of the sums over zeros and prove the following.
\begin{lem}\label{R lem}Assume RH. Let $F(u)$ be given by \eqref{F} and let 
\begin{equation}\label{k}
k(u)=
\begin{cases}
\Big(\tfrac{1}{2u}-\tfrac{\pi^2}{2}\cot(\pi^2 u)\Big)^2 \,\, &\text{ if } |u|\leqs 1/2\pi
\\
\tfrac{1}{4u^2} &\,\, \text{ if } |u|>1/2\pi.
\end{cases}
\end{equation}
 Then for $|y_j|\leqs \log T$ and $x=T^\beta$ with fixed $0<\beta<1/2$ we have
\begin{multline*}
R(T)
=
\frac{2T}{(2\pi^2\beta)^2}
\int_{0}^\infty k(u/2\pi \beta) F(u)\cos(u(y_1-y_2)\log T)du
\\
+O\big(T \tfrac{(y_1-y_2)^2}{1+(y_1-y_2)^2}\big)
+O((\log T)^4)
+O(T/\log T)
\end{multline*}
uniformly in $y_j$.
\end{lem}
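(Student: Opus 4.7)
The plan is to follow the Fourier-analytic approach of Goldston \cite{Goldston}: open the square, compute the inner $t$-integral by Plancherel, identify the resulting spectral kernel with $k$, and relate the double sum over zeros to Montgomery's $F(u)$. The result is essentially a shifted version of Goldston's mean-square computation, and most of the analysis transfers with only cosmetic changes to accommodate the shifts $y_1,y_2$.

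First I would open the product to obtain
\[
R(T) = \frac{1}{\pi^2}\sum_{\gamma_1,\gamma_2}\int_1^T h((t+y_1-\gamma_1)\log x)\,h((t+y_2-\gamma_2)\log x)\,dt.
\]
Using $h(v)\ll \min(1,|v|^{-2})$, the zero pairs with either $\gamma_j$ outside a window of length $O(\log T)$ around $[1,T]$ contribute at most $O(T/\log T)$ in total after the Plancherel step below, and the inner $t$-integral can be extended from $[1,T]$ to $\mathbb{R}$ at the cost of a boundary error bounded by $O((\log T)^4)$ once summed over zero pairs close to $1$ or $T$ (of which there are $O((\log T)^2)$ in a relevant window, each contributing $O(1/\log x)$). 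A substitution $s=(t+y_1-\gamma_1)\log x$ converts the extended integral to $(\log x)^{-1}\int_\mathbb{R} h(s)h(s-\lambda)\,ds$, where $\lambda = ((\gamma_2-\gamma_1)-(y_2-y_1))\log x$.

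Plancherel then gives $\int_\mathbb{R} h(s)h(s-\lambda)\,ds = (2\pi)^{-1}\int_\mathbb{R}|\widehat h(\xi)|^2 e^{i\xi\lambda}\,d\xi$. A direct calculation of $\widehat h$, using the integral representation of its non-oscillatory factor and shifting contours through the poles of $1/\sinh u$, identifies $|\widehat h(\xi)|^2$ (up to rescaling of the argument) with the kernel $k$ of \eqref{k}; this is essentially the content of Goldston's Lemma 2 in \cite{Goldston}. Substituting $\xi = u/\beta$, so that $\xi\log x = u\log T$, turns the phase into $\exp(iu\log T[(y_1-y_2)-(\gamma_1-\gamma_2)])$ and delivers the prefactor $T/(2\pi^2\beta)^2$ after book-keeping.

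Finally, the remaining sum $\sum_{\gamma_1,\gamma_2} T^{-iu(\gamma_1-\gamma_2)}$ differs from $\frac{T\log T}{2\pi}F(u,T)$ only by the Montgomery weight $w(\gamma_1-\gamma_2)=4/(4+(\gamma_1-\gamma_2)^2)$. Since $1-w(r)\ll r^2/(4+r^2)$ and the zero-density bound $N(t+1)-N(t)\ll \log t$ controls the number of pairs at any given separation, inserting $w$ costs, after integration against $k$, an additional $O(T/\log T)$. Evenness in $u$ of both $k$ and $F$ folds the integral onto $[0,\infty)$, producing the factor $\cos(u(y_1-y_2)\log T)$. The shift-dependent error $O(T(y_1-y_2)^2/(1+(y_1-y_2)^2))$ enters when one replaces $\lambda$ by $(\gamma_1-\gamma_2)\log x$ in certain boundary estimates near $t=1$ and $t=T$: the term is negligible when $|y_1-y_2|$ is small and saturates to $O(T)$ uniformly, matching what the explicit formula would lose from the endpoints. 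The main obstacle will be the precise identification of $|\widehat h|^2$ with $k$ at the correct scale, and the careful bookkeeping of the boundary and weight errors in a form that is uniform in $y_1,y_2$; everything else is a routine application of Plancherel and Montgomery's formalism.
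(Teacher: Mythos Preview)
Your overall plan---Plancherel on the convolution, identification of $|\widehat h|^2$ with $k$, unfolding to recover $F(u)$---matches the paper and is correct. The genuine gap is in your treatment of the Montgomery weight $w$ and your attribution of the shift-dependent error $O\big(T\tfrac{(y_1-y_2)^2}{1+(y_1-y_2)^2}\big)$. You claim that inserting $w(\gamma_1-\gamma_2)$ costs only $O(T/\log T)$ via the zero-density bound, and that the $(y_1-y_2)$-dependent error comes from boundary effects near $t=1,T$. Both are wrong. The boundary errors are $O((\log T)^4)$ \emph{uniformly} in the shifts. The shift-dependent error arises precisely at the $w$-insertion step: writing $Y=y_1-y_2$, the kernel $\widehat k$ localises to $|\gamma_1-\gamma_2-Y|\ll 1/\log x$, where $1-w(\gamma_1-\gamma_2)\asymp Y^2/(1+Y^2)$ is \emph{not} small when $Y$ is large. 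The naive zero-density count gives $\ll T(\log T)^2$ pairs in each such window, yielding an error $\tfrac{Y^2}{1+Y^2}\,T\log T$ after the $1/\log x$ prefactor---a full $\log T$ too large.

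To recover the stated error one must do better than zero-density: the paper writes the number of $\gamma_1$ in a window around $\gamma_2+Y$ as a difference of $S$-values plus a smooth term, and then invokes Fujii's bound $\sum_{0<\gamma\leqs T}S(\gamma+a)\ll T\log T$ (uniform in $a\ll T^A$) to save the extra logarithm. Without this input, or an equivalent estimate exploiting cancellation in $S$ summed over zeros, the $w$-insertion step fails to give the error term you need, and your sketch does not close.
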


\begin{proof}
Following a similar argument to section 3 of \cite{Goldston} we split the range of summation to $\gamma\in[0,T]$, $\gamma\in[T,T+3\log T]$, $\gamma\geqs T+3\log T$, $\gamma\in [-3\log T, 0]$, $\gamma\leqs -3\log T$ and use the bounds $h(v)\ll 1/(1+v^2)$, $|y_j|\leqs \log T$ to deduce that 
\[
R(T)
=
\sum_{\gamma_1,\gamma_2\in [0,T]}\int_1^Th((t+y_1-\gamma_1)\log x)h((t+y_2-\gamma_2)\log x)dt+O((\log T)^4).
\] 
With a similar argument, we then extend the range of integration to $t\in(-\infty,\infty)$ at the cost of the same error. Writing the integral as a convolution and following the remainder of section 3 of \cite{Goldston} we find that\footnote{Here, the presence of $-2\pi$ comes from Goldston's use of an alternative convention for the Fourier transform} 
\[
R(T)
=
\frac{1}{\pi^2\log x}\sum_{\gamma_1,\gamma_2\in [0,T]}\widehat{k}(-2\pi(\gamma_1-\gamma_2+y_2-y_1)\log x)+O((\log T)^4).
\] 
with $k(u)$ as in \eqref{k}. 

Next, we wish to add in a factor of $w(\gamma_1-\gamma_2)$ to our sum.
The difference between these two sums is 
\begin{align*}
\sum_{\gamma_1,\gamma_2\in [0,T]}\widehat{k}(-2\pi(\gamma_1-\gamma_2+y_2-y_1)\log x)\cdot\frac{(\gamma_1-\gamma_2)^2}{4+(\gamma_1-\gamma_2)^2}
\end{align*}
and we seek to upper bound this. Write $Y=y_1-y_2$. From formula (4.3) of \cite{Goldston} we have $\widehat{k}(u)\ll 1/(1+u^2)$ and hence the above is 
\begin{multline}\label{zero sum}
\ll\frac{1}{\log^2 x}\sum_{\gamma_1,\gamma_2\in [0,T]}\frac{1}{1+(\gamma_1-\gamma_2-Y)^2\log^2 x}\cdot\frac{(\gamma_1-\gamma_2-Y+Y)^2\log^2 x}{4+(\gamma_1-\gamma_2)^2}
\\
\ll 
\frac{1}{\log^2 x}\sum_{\gamma_1,\gamma_2\in [0,T]}\frac{1}{4+(\gamma_1-\gamma_2)^2}
+
Y^2\sum_{\gamma_1,\gamma_2\in [0,T]}\frac{1}{1+(\gamma_1-\gamma_2-Y)^2\log^2 x}\cdot\frac{1}{4+(\gamma_1-\gamma_2)^2}
\end{multline}
Note we may restrict the second sum to $|(\gamma_1-\gamma_2-Y)\log x|\leqs \tfrac{1}{2}Y\log x$ since otherwise we get something of the same order as the first term. By a short calculation the first term here is
\[
\frac{1}{\log^2 x}\sum_{\gamma_1,\gamma_2\in [0,T]}\frac{1}{4+(\gamma_1-\gamma_2)^2}
\ll 
T\frac{\log^2 T}{\log^2 x}
\ll T. 
\]

We bound the second term by 
\begin{align*}
\ll 
\frac{Y^2}{1+Y^2}\bigg(
\sum_{\substack{0<\gamma_1,\gamma_2\leqs T\\|\gamma_1-\gamma_2-Y|\leqs1/\log x}}1
+
\sum_{n\leqs \tfrac12 Y\log x}\frac{1}{n^2}\sum_{\substack{0<\gamma_1,\gamma_2\leqs T\\n/\log x< |\gamma_1-\gamma_2-Y|\leqs (n+1)/\log x}}1\bigg)
\end{align*}
and note that we may restrict the sum over $\gamma_2$, say, to $\gamma_2> 3\log T$ since
\[
\sum_{\gamma_2\leqs 3\log T} \sum_{\substack{0<\gamma_1\leqs T\\n/\log x< |\gamma_1-\gamma_2-Y|\leqs (n+1)/\log x}}1\ll (\log T)^2\log_2 T
\]
since the number of zeros in a window of length $1/\log x\leqs 1$ around $\gamma_2+Y\pm n/\log x$ is $\ll \log(\gamma_2+2|Y|)\ll\log T$. Then for $0\leqs n\leqs Y/2\log x$ and $\gamma_2\geqs 3|Y|/2$, since 
\begin{align*}
&
N(\gamma_2+Y\pm \tfrac{n}{\log x}+\tfrac{1}{\log x})-N(\gamma_2+Y\pm \tfrac{n}{\log x}-\tfrac{1}{\log x})
\\
= & S(\gamma_2+Y\pm \tfrac{n}{\log x}+\tfrac{1}{\log x})-S(\gamma_2+Y\pm \tfrac{n}{\log x}-\tfrac{1}{\log x})
+
O\Big(\frac{\log(\gamma_2+2|Y|)}{\log x}\Big)
+
O\Big(\frac{1}{\gamma_2}\Big),
\end{align*}
we have 
\begin{align*}
&
\sum_{3\log T\leqs \gamma_2\leqs T} 
\sum_{\substack{0<\gamma_1\leqs T\\n/\log x< |\gamma_1-\gamma_2-Y|\leqs (n+1)/\log x}}1
\ll  
\sum_{3\log T\leqs \gamma_2\leqs T} 
\sum_{\substack{\gamma_2+Y\pm n/\log x-1/\log x\\<\gamma_1\leqs \gamma_2+Y\pm n/\log x+1/\log x}}1
\\
\ll &
\,\,T\frac{\log^2 T}{\log x}
+\bigg|\sum_{3\log T\leqs \gamma_2\leqs T}S(\gamma_2+Y\pm \tfrac{n}{\log x}+\tfrac{1}{\log x})\bigg| 
+\bigg|\sum_{3\log T\leqs \gamma_2\leqs T}S(\gamma_2+Y\pm \tfrac{n}{\log x}-\tfrac{1}{\log x})\bigg|. 
\end{align*}
From the Lemma of \cite{Fujii} we have that for any $a\ll T^A$, with $A$ some positive constant,
\[
\sum_{0<\gamma\leqs T,\,\,\gamma+a>0}S(\gamma+a)\ll T\log T
\] 
and hence putting things together the second term on the right of \eqref{zero sum} is 
\[
\ll \frac{Y^2}{1+Y^2}\cdot T\log T.
\]

Thus we have shown that 
\begin{multline*}
R(T)
=
\frac{1}{\pi^2\log x}\sum_{\gamma_1,\gamma_2\in [0,T]}\widehat{k}(-2\pi (\gamma_1-\gamma_2+y_2-y_1)\log x)\cdot w(\gamma_1-\gamma_2)
\\
+O\big(T \tfrac{(y_1-y_2)^2}{1+(y_1-y_2)^2}\big)
+O((\log T)^4)
+O(T/\log T).
\end{multline*}
Unfolding the integral in $\widehat{k}$ and denoting $e(v)=e^{2\pi i v}$ we have 
\begin{align*}
\frac{1}{\pi^2\log x}&\sum_{\gamma_1,\gamma_2\in [0,T]}\widehat{k}(-2\pi(\gamma_1-\gamma_2+y_2-y_1)\log x)\cdot w(\gamma_1-\gamma_2)
\\
= 
&\frac{1}{\pi^2\log x}
\int_{-\infty}^\infty k(u) \sum_{0<\gamma_1,\gamma_2\leqs T}e(-u(\gamma_1-\gamma_2+y_2-y_1)\log x)w(\gamma_1-\gamma_2)du
\\
= &
\frac{T\log T}{2\pi^3\log x}
\int_{-\infty}^\infty k(u) F(2\pi \beta u)e(u(y_1-y_2)\log x)du
\\
= &
\frac{2T}{(2\pi^2\beta)^2}
\int_{0}^\infty k(u/2\pi \beta) F(u)\cos(u(y_1-y_2)\log T)du
\end{align*}
since $k$ and $F$ are even.
\end{proof}

\subsection{Computing $L(T)$}
We now turn our attention to $L(T)$. Expanding we find that 
\[
L(T)
=
\int_1^TS(t+y_1)S(t+y_2)dt+G(T,y_1,y_2)+H(T,y_1,y_2)+H(T,y_2,y_1)
\]
where 
\begin{multline*}
\qquad\qquad G(T,y_1,y_2)
=
\frac{1}{\pi^2} \sum_{n_1,n_2\leqs x}\frac{\Lambda(n_1)\Lambda(n_2)}{(n_1n_2)^{1/2}\log n_1\log n_2}f\big(\tfrac{\log n_1}{\log x}\big)f\big(\tfrac{\log n_2}{\log x}\big)
\\
\times \int_1^T\sin((t+y_1)\log n_1)\sin((t+y_2)\log n_2)dt.
\end{multline*}
and
\[
H(T,y_1,y_2)
=
\int_1^TS(t+y_1)\cdot \frac{1}{\pi}\sum_{n\leqs x}\frac{\Lambda(n)\sin((t+y_2)\log n)}{n^{1/2}\log n}f\big(\tfrac{\log n}{\log x}\big)dt
\]
Thus it remains to compute $G$ and $H$.

\begin{lem}\label{GH lem}
Assume RH and that $|y_j|\leqs \log T$. Then 
\[
G(T,y_1,y_2)
=
\frac{T}{2\pi^2}\sum_{n\leqs x}\frac{\Lambda(n)^2\cos((y_2-y_1)\log n)}{n\log^2n}f\big(\tfrac{\log n}{\log x}\big)^2
+
O(x^{2+\epsilon})
\]
and
\[
H(T,y_1,y_2)
=
-\frac{T}{2\pi^2}\sum_{n\leqs x}\frac{\Lambda(n)^2\cos((y_2-y_1)\log n)}{n\log^2 n}f\big(\tfrac{\log n}{\log x}\big)+O(x^{2+\epsilon}).
\]
\end{lem}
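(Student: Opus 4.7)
The plan is to handle $G$ by direct expansion and $H$ by combining an explicit formula of Landau with the Riemann--von Mangoldt relation $S(t) = N(t) - \theta(t)/\pi - 1$.

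For $G(T,y_1,y_2)$, I would apply $\sin A\sin B = \tfrac12[\cos(A-B)-\cos(A+B)]$ to the integrand. The ``difference'' part produces $\cos(t\log(n_1/n_2) + y_1\log n_1 - y_2\log n_2)$: when $n_1=n_2=n$ the $t$-integral contributes $\tfrac{T-1}{2}\cos((y_1-y_2)\log n)+O(1)$, and summing the resulting diagonal gives the stated main term. When $n_1\neq n_2$, the $t$-integral is bounded by $\ll 1/|\log(n_1/n_2)|\ll \max(n_1,n_2)$, and the ``sum'' part is uniformly $\ll 1/\log(n_1n_2)$. Plugging in $\Lambda(n_j)\leqs \log n_j$ and $|f(\log n_j/\log x)|\ll 1$, these contributions are bounded by $\ll \sum_{n_1\neq n_2\leqs x}\frac{x}{\sqrt{n_1n_2}} \ll x^{2+\epsilon}$, producing the claimed error.

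For $H(T,y_1,y_2)$ I would first swap summation and integration to reduce to evaluating
\[
I_n := \int_1^T S(t+y_1)\sin((t+y_2)\log n)\,dt
\]
for each $n\leqs x$. Writing $\sin((t+y_2)\log n)$ as $\sin((t+y_1)\log n)\cos((y_2-y_1)\log n)+\cos((t+y_1)\log n)\sin((y_2-y_1)\log n)$, a shift $u=t+y_1$ reduces everything to integrals of $S(u)$ against $\sin(u\log n)$ or $\cos(u\log n)$ over an interval of length $T$. I would then use $S(u)=N(u)-\theta(u)/\pi - 1$ and integrate by parts: the boundary terms are $O(\log T/\log n)$, the $\theta$-part is $O(\log T/\log n)$ after a second IBP using $\theta'(u)\ll \log u$, and the discrete part contributes
\[
\sum_{0<\gamma\leqs T}\frac{\cos((\gamma+y_2-y_1)\log n)}{\log n}+O(\log T/\log n).
\]
Landau's explicit formula $\sum_{0<\gamma\leqs T}n^{i\gamma}=-\tfrac{T\Lambda(n)}{2\pi n^{1/2}}+O(n^{1/2}\log^2 T)$ (uniformly for prime powers $n\leqs x$) then yields $I_n = -\frac{T\Lambda(n)\cos((y_2-y_1)\log n)}{2\pi n^{1/2}\log n}+O(n^{1/2}\log^2 T/\log n)$. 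Reinserting this into the definition of $H$ gives the stated main term, and the error sums to $\ll \sum_{n\leqs x}\frac{\Lambda(n)n^{1/2}}{n^{1/2}\log^2 n}\log^2 T\ll x\log^2 T$, which is absorbed into $O(x^{2+\epsilon})$.

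The main obstacle is the uniform applicability of Landau's formula: the implied constant must be controlled as $n$ ranges over prime powers up to $x=T^\beta$ with $\beta<1/2$, so that the accumulated error over the outer sum in $H$ is subsumed by $O(x^{2+\epsilon})$. The error analysis in $G$, though routine, also requires careful handling of the off-diagonal pairs with $n_1/n_2$ close to $1$; the crude bound $|\log(n_1/n_2)|\geqs 1/\max(n_1,n_2)$ is sufficient here since it only costs a factor of $x$, keeping the total off-diagonal contribution below the stated threshold. The shift parameters $y_j$ enter only through bounded phases $\cos((y_2-y_1)\log n)$, so the $|y_j|\leqs \log T$ hypothesis plays no essential role in the error bookkeeping beyond ensuring the additional phases in the boundary terms are harmless.
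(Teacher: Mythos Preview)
Your proposal is correct and follows essentially the same route as the paper: for $G$ the arguments coincide, while for $H$ the paper quotes Titchmarsh's Lemma~$\gamma$ (giving $\int_1^T S(t)\sin(t\log n)\,dt=-\tfrac{T}{2\pi}\tfrac{\Lambda(n)}{n^{1/2}\log n}+O(n^{3/2}\log T)$ on RH) and its cosine analogue as black boxes, whereas you re-derive these via $S=N-\theta/\pi-1$, integration by parts, and Landau's uniform formula for $\sum_{0<\gamma\leqs T}n^{i\gamma}$---which is exactly how Titchmarsh's lemma is proved. The difference is packaging, not strategy.
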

\begin{proof}
The proof for $G$ follows Lemma 6 of \cite{Goldston} and proceeds in the familiar way. The diagonal terms $n_1=n_2$ give the main contribution of 
\[
\frac{1}{2\pi^2}(T+O(1))\sum_{n\leqs x}\frac{\Lambda(n)^2\cos((y_2-y_1)\log n)}{n\log^2n}f\big(\tfrac{\log n}{\log x}\big)^2
\]
since 
\[
\int_1^T\sin((t+y_1)\log n)\sin((t+y_2)\log n)dt =\frac{1}{2}T\cos((y_2-y_1)\log n)+O(1).
\]
The off-diagonals can be estimated as in \cite{Goldston} and are $\ll x^2$.

For $H$ we swap the order of summation and integration to find
\begin{align*}
H(T,y_1,y_2)
= &
\frac{1}{\pi}\sum_{n\leqs x}\frac{\Lambda(n)}{n^{1/2}\log n}f\big(\tfrac{\log n}{\log x}\big)\cdot \int_1^TS(t+y_1)\sin((t+y_2)\log n)dt
\\
= &
\frac{1}{\pi}\sum_{n\leqs x}\frac{\Lambda(n)}{n^{1/2}\log n}f\big(\tfrac{\log n}{\log x}\big)\cdot \int_1^TS(t)\sin((t+y_2-y_1)\log n)dt+O(x^{1/2}(\log T)^2)
\end{align*}
since $f(\log n/\log x)\ll 1$ for $n\leqs x$, $y_j\ll \log T$ and $S(t)\ll \log T$. The above integral is given by 
\[
\cos((y_2-y_1)\log n)\int_1^TS(t)\sin(t\log n)dt
+
\sin((y_2-y_1)\log n)\int_1^TS(t)\cos(t\log n)dt.
\]

Now,  Lemma $\gamma$ of \cite{Titch} (see also (6.3) of \cite{Goldston}) states that assuming RH,
\[
\int_1^TS(t)\sin(t\log n)dt
=
-\frac{T}{2\pi}\frac{\Lambda(n)}{n^{1/2}\log n}+O(n^{3/2}\log T)
\]
and the same proof gives that 
\[
\int_1^TS(t)\cos(t\log n)dt
=
O(n^{3/2}\log T).
\]
Hence 
\[
H(T,y_1,y_2)
=
-\frac{T}{2\pi^2}\sum_{n\leqs x}\frac{\Lambda(n)^2\cos((y_2-y_1)\log n)}{n\log^2 n}f\big(\tfrac{\log n}{\log x}\big)+O(x^{2+\epsilon})
\]
and the result follows.
\end{proof}

\subsection{Combining formulae} Combining Lemmas \ref{R lem} and \ref{GH lem} with $\beta<1/2$ through the approximate formula \eqref{approx form} we find that 
\begin{align}\label{S corr int}
\int_0^TS(t+y_1)S(t+y_2)dt 
= &
\frac{T}{\pi^2}\sum_{n\leqs T^\beta}\frac{\Lambda(n)^2\cos((y_2-y_1)\log n)}{n\log^2n}\Big[f\big(\tfrac{\log n}{\beta\log T}\big)-\tfrac12f\big(\tfrac{\log n}{\beta\log T}\big)^2\Big]
\nonumber
\\
&
+
\frac{2T}{(2\pi^2\beta)^2}
\int_{0}^\infty k(u/2\pi \beta) F(u)\cos(u(y_1-y_2)\log T)du
\nonumber
\\
&
+O\big(T \tfrac{(y_1-y_2)^2}{1+(y_1-y_2)^2}\big)
+O\Big(T^{1/2+\beta/2}\Big(\int_{0}^\infty k(u/2\pi \beta) F(u)du\Big)^{1/2}\Big)+o(T)
\end{align}
where the second to last error term arises from the error terms of \eqref{approx form}, the Cauchy--Schwarz inequality and our formula for $R(T)$ with $y_1=y_2$.
From \eqref{F int} and the definition of $k(u)$ given in \eqref{k} we immediately see that this error term is $O(T^{3/4})$.

Let us deal with the integral involving $F(u)$ first. 
Writing $C(u)=\cos(u(y_1-y_2)\log T)$ for short, from the asymptotic formula for $F(u)$ given in \eqref{F 1} we find that 
\begin{equation}\label{F int decomp}
\begin{split}
&
\frac{2T}{(2\pi^2\beta)^2}
\int_{0}^\infty k(u/2\pi \beta) F(u)\cos(u(y_1-y_2)\log T)du
\\
= &
\frac{T}{2\pi^2}
\int_0^\beta \frac{1}{u^2}\Big(1-\tfrac{\pi u}{2\beta}\cot(\tfrac{\pi u}{2\beta})\Big)^2\big(u+o(1)+(1+o(1))T^{-2u}\log T\big)C(u)du
\\
& +
\frac{T}{2\pi^2}
\int_\beta^1 \frac{1}{u^2}(u+o(1)+(1+o(1))T^{-2u}\log T)C(u)du
+
\frac{T}{2\pi^2}
\int_1^\infty \frac{F(u)C(u)}{u^2}du
\\
= &
\frac{T}{2\pi^2}
\int_0^\beta \frac{1}{u}\big(1-f(\tfrac{u}{\beta})\big)^2C(u)du
+
\frac{T}{2\pi^2}
\int_\beta^1 \frac{C(u)}{u}du
\\
&
+\frac{T}{2\pi^2}
\int_1^\infty \frac{F(u)C(u)}{u^2}du+o(T)
\end{split}
\end{equation}
recalling that $f(u)=\frac{\pi }{2}u\cot\big(\tfrac{\pi u}{2}\big)$. Note that from the asymptotic formula $f(u)=1+O(u^2)$ the integral involving $T^{-2u}\log T$ was $o(T)$ and also that the first integral on the right may be restricted to $\tfrac{\log 2}{\log T}\leqs u\leqs \beta$ at the cost of an error $o(T)$.

Let us compare this to the sum over primes and prime powers. This is given by 
\begin{equation}\label{prime sum decomp}
\begin{split}
&
\frac{T}{\pi^2}\sum_{n\leqs x}\frac{\Lambda(n)^2\cos((y_2-y_1)\log n)}{n\log^2n}\Big[f\big(\tfrac{\log n}{\log x}\big)-\tfrac12f\big(\tfrac{\log n}{\log x}\big)^2\Big]
\\
= &
\frac{T}{\pi^2}\sum_{p\leqs x}\frac{\cos((y_2-y_1)\log p)}{p}\Big[f\big(\tfrac{\log p}{\log x}\big)-\tfrac12f\big(\tfrac{\log p}{\log x}\big)^2\Big]
\\
& +
\frac{T}{2\pi^2}\sum_{p}\sum_{m=2}^\infty\frac{\cos(m(y_2-y_1)\log p)}{m^2p^m}
+o(T)
\end{split}
\end{equation}
where for the sum over prime powers we have used that $f(u)\ll 1$ for $u\leqs 1$ and $f(m\log p/\log x)\to1$ as $x\to\infty$ for any fixed $p^m$. Following \cite{Goldston} we write 
\begin{equation}\label{T(u)}
T(u)=\sum_{2\leqs p\leqs u}\frac1p=\log\log u + \gamma+\sum_{m=2}^\infty\sum_p \frac{1}{mp^m}+r(u)
\end{equation}
where $r(u)\ll 1/\log u$. Then the sum over primes in \eqref{prime sum decomp} is 
\begin{equation*}\label{prime sum int}
\begin{split}
&
\frac{T}{\pi^2}\int_2^x \cos((y_1-y_2)\log u)\Big[f\big(\tfrac{\log u}{\log x}\big)-\tfrac12f\big(\tfrac{\log u}{\log x}\big)^2\Big]dT(u)
\\
= &
\frac{T}{\pi^2}\int_2^x \cos((y_1-y_2)\log u)
\Big[f\big(\tfrac{\log u}{\log x}\big)-\tfrac12f\big(\tfrac{\log u}{\log x}\big)^2\Big]
\Big(\frac{d(\log u)}{\log u}+dr(u)\Big)
\\
= & 
I_1+I_2,
\end{split}
\end{equation*}
say.

In $I_1$ we substitute $u\mapsto T^u$ to find
\begin{equation}\label{I_1}
I_1
=
\frac{T}{2\pi^2}\int_{\tfrac{\log 2}{\log T}}^\beta C(u)\big(2f\big(\tfrac{u}{\beta}\big)-f\big(\tfrac{u}{\beta}\big)^2\big) \frac{du}{u}
\end{equation}
and note that when this is added to the first term on the right of \eqref{F int decomp} the terms involving $f$ cancel.

In $I_2$ we integrate by parts to see 
\begin{multline*}
I_2
= 
-\frac{T}{\pi^2}\cos((y_1-y_2)\log 2)\Big[f\big(\tfrac{\log 2}{\log x}\big)-\tfrac12f\big(\tfrac{\log 2}{\log x}\big)^2\Big]r(2)+O(T/\log x)
\\
 -
\frac{T}{\pi^2}\int_2^x r(u)\frac{d}{dx} \bigg[\cos((y_1-y_2)\log u)\Big(f\big(\tfrac{\log u}{\log x}\big)-\tfrac12f\big(\tfrac{\log u}{\log x}\big)^2\Big)\bigg]du
\end{multline*}
Applying \eqref{T(u)} to find the value of $r(2)$ and using the fact that $f(\log 2/\log x)=1+O(1/\log x)$ along with the bounds $(d/du)f(\log u/\log x)\ll 1/(u\log x)$ and $r(u)\ll 1/\log u$ 
we see that 
\begin{multline}\label{I2}
I_2
= 
\frac{T}{2\pi^2}\cos((y_1-y_2)\log 2)\Big(\log\log 2 + \gamma+\sum_{m=2}^\infty\sum_p \frac{1}{mp^m}\Big)
\\
 +(y_1-y_2)\frac{T}{\pi^2}\int_2^x r(u)\sin((y_1-y_2)\log u)\Big[f\big(\tfrac{\log u}{\log x}\big)-\tfrac12f\big(\tfrac{\log u}{\log x}\big)^2\Big]\frac{du}{u}+O\big(\tfrac{T\log_2x}{\log x}\big).
\end{multline}
Note the second term here is $\ll |y_1-y_2|T\log\log T$. Combining this with \eqref{F int decomp}, \eqref{prime sum decomp} and \eqref{I_1} in \eqref{S corr int} we acquire
\begin{align}\label{S corr int 2.5}
\int_0^TS(t+y_1)S(t+y_2)dt 
= &
\frac{T}{2\pi^2}\int_{\tfrac{\log 2}{\log T}}^1 \frac{C(u)}{u}du
+\frac{T}{2\pi^2}
\int_1^\infty \frac{F(u)C(u)}{u^2}du
+
\frac{T}{2\pi^2}c(y_1,y_2)
\nonumber
\\
&
+O\big(T \tfrac{(y_1-y_2)^2}{1+(y_1-y_2)^2}\big)+O(T|y_1-y_2|\log_2 T)+o(T)
\end{align}
where $c(y_1,y_2)$ is given by \eqref{c}. Proposition \ref{S corr prop} then follows.


\section{Proof of Theorem \ref{log der thm}}

Recalling Proposition \ref{log der prop} we have 
\[
\sum_{n\leqs X}\frac{\Lambda(n)}{n^{1/2+it}}(1-\tfrac{\log n}{\log X}) 
=
\int_{-\log T/\log_2 T}^{\log T/\log_2 T}S(t+y)f_X(y)dy+O(1)
\]
with
\[
f_X(y)=\frac{X^{iy}(2-iy\log X)-2-iy\log X}{y^3\log X}.
\]
Thus by Cauchy--Schwarz, 
\[
\int_T^{2T}\bigg|\sum_{n\leqs X}\frac{\Lambda(n)}{n^{1/2+it}}(1-\tfrac{\log n}{\log X}) 
\bigg|^2dt
=
\mc{K}(T)+O(T^{1/2}\mc{K}(T)^{1/2})
\]
where 
\begin{align*}
\mc{K}(T)
= &
\int_T^{2T}\bigg|\int_{-\log T/\log_2 T}^{\log T/\log_2 T}S(t+y)f_X(y)dy\bigg|^2dt
\\
= &
\int_{|y_j|\leqs \tfrac{\log T}{\log_2 T}}\bigg(\int_T^{2T}S(t+y_1)S(t+y_2)dt\bigg)f_X(y_1)\ol{f_X(y_2)}dy_1dy_2.
\end{align*}
Applying Proposition \ref{S corr prop} the inner integral is 
\begin{equation}\begin{split}\label{S corr int 3}
&\frac{T}{2\pi^2}\int_{{\log 2}}^{\log T} \frac{\cos(u(y_1-y_2))}{u}du
+\frac{1}{2\pi^2}
\int_1^\infty {tF(u,t)\cos(u(y_1-y_2)\log t)\Big|_{t=T}^{2T}}\frac{du}{u^2}
\\
&
+
\frac{T}{2\pi^2}c(y_1,y_2)
+O\big(T \tfrac{(y_1-y_2)^2}{1+(y_1-y_2)^2}\big)+O(T|y_1-y_2|\log_2 T)+o(T)
\end{split}
\end{equation}
since $\int_{\log T}^{\log 2T}du/u\ll1/\log T$. We first estimate the contribution from the error terms here. 

Substituting $y\mapsto y/\log X$ in each of the following integrals we find 
\[
\int_{-\log T/\log_2 T}^{\log T/\log_2 T}|f_X(y)|dy\ll\log T,\qquad  \int_{-\log T/\log_2 T}^{\log T/\log_2 T}|yf_X(y)|dy\ll \log_2 T
\]
and 
\[
 \int_{-\log T/\log_2 T}^{\log T/\log_2 T}|y^2f_X(y)|dy\ll \log T/\log_2 T.
\]
Therefore, the error terms of \eqref{S corr int 3} contribute $o( T\log^2 T)$ to $\mc{K}(T)$. The main terms all involve a factor of cosine and for these we use the following lemma. 
\begin{lem}
For $x\in\mathbb{R}$ we have 
\[
\int_{-\log T/\log_2 T}^{\log T/\log_2 T} e^{-ixy}f_X(y)dy
=
\mathds{1}_{0<x<\log X} \cdot 2\pi ix\frac{\log(X/e^x)}{\log X} +O\big(\tfrac{\log_2 T}{\log T}\big)
\]
\end{lem}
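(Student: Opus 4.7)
The plan is to recognise $f_X(y)$ as the Fourier transform of an explicit compactly supported piecewise-linear function, and then to invoke Fourier inversion together with a trivial tail estimate. With $\mc{L}=\log X$, the right-hand side of the identity to be proved is (up to the factor $2\pi$) equal to the function $g(x):=ix(1-x/\mc{L})\mathds{1}_{0<x<\mc{L}}$, so the natural move is to verify directly that $\widehat g(y):=\int_{-\infty}^{\infty}e^{iyx}g(x)\,dx$ coincides with $f_X(y)$.

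To carry this out I would split $\widehat g(y)=i\int_0^{\mc{L}} xe^{iyx}\,dx-(i/\mc{L})\int_0^{\mc{L}} x^2 e^{iyx}\,dx$ and integrate by parts once in the first term and twice in the second. After collecting the boundary contributions and simplifying, one arrives at
\[
\widehat g(y)=-\frac{i(X^{iy}+1)}{y^2}+\frac{2(X^{iy}-1)}{y^3\mc{L}},
\]
which matches $f_X(y)$ once its numerator is rewritten as $X^{iy}(2-iy\mc{L})-2-iy\mc{L}=2(X^{iy}-1)-iy\mc{L}(X^{iy}+1)$. This identification is the only step of the argument with any genuine computation.

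Once $f_X=\widehat g$ is in hand, I observe that $g$ is continuous on $\mb{R}$ (the factor $x(1-x/\mc{L})$ vanishes at both endpoints of its support) and in $L^1$, while $f_X$ has a removable singularity at the origin and satisfies $|f_X(y)|\ll\min(1,1/y^2)$ since $|X^{iy}|=1$; in particular $f_X\in L^1$. Fourier inversion then yields, pointwise in $x$,
\[
\int_{-\infty}^{\infty}e^{-ixy}f_X(y)\,dy=2\pi g(x)=\mathds{1}_{0<x<\log X}\cdot 2\pi ix\,\frac{\log(X/e^x)}{\log X}.
\]
Finally, the symmetric truncation at $|y|=\log T/\log_2 T$ introduces an error bounded by
\[
\ll\int_{\log T/\log_2 T}^{\infty}|f_X(y)|\,dy\ll \int_{\log T/\log_2 T}^{\infty}\frac{dy}{y^2}\ll\frac{\log_2 T}{\log T},
\]
which matches the claimed error term. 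There is no real obstacle here; the only nontrivial labour is the bookkeeping in the integration-by-parts identification of $f_X$ as $\widehat g$.
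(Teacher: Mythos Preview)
Your proof is correct. The integration-by-parts identification of $f_X$ with $\widehat{g}$ for $g(x)=ix(1-x/\mc{L})\mathds{1}_{0<x<\mc{L}}$ checks out, $g$ is continuous with $\widehat{g}\in L^1$, so pointwise Fourier inversion applies, and the tail bound $f_X(y)\ll y^{-2}$ gives the stated error.

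The paper takes a different route. Rather than recognising $f_X$ as an explicit Fourier transform, it recalls from the computations in Proposition~\ref{log der prop} the contour-integral representation
\[
f_X(y)=\frac{i}{2\pi i\log X}\int_{(c)}\frac{1}{(s-iy)^2}\frac{X^s}{s^2}\,ds,\qquad c>0,
\]
swaps the order of the $y$- and $s$-integrals, evaluates the inner integral $\int_{\mb{R}}e^{-ixy}(y-s/i)^{-2}\,dy$ by residues (the double pole lies in the lower half-plane, so only $x>0$ contributes), and then recognises the remaining $s$-integral as the standard Mellin formula $\frac{1}{2\pi i}\int_{(c)}Y^s s^{-2}\,ds=\max(\log Y,0)$. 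Your approach is more elementary and self-contained; the paper's ties the lemma back to the Mellin machinery from which $f_X$ arose and avoids the integration-by-parts bookkeeping. Both are short and either would serve.
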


\begin{proof}
Extending the integrals to $\pm \infty$ introduces an  error $\ll \log_2 T/\log T$ since $f_X(y)\ll 1/y^2$ for $y\gg 1$. Then we apply the formula 
\[\qquad 
f_X(y)=\frac{i}{2\pi i \log X }\int_{(c)}\frac{1}{(s-iy)^2} \frac{X^s}{s^2}ds, \qquad c>0 
\]
which follows by calculations akin to those of Proposition \ref{log der prop}. By absolute convergence we may interchange the order of integration to find that our integral is given by 
\[
\frac{-i}{2\pi i \log X }\int_{(c)}\bigg(\int_{-\infty}^\infty \frac{e^{-ixy}}{(y-s/i)^2}dy\bigg) \frac{X^s}{s^2}ds.
\]
The inner integral may be computed by contour integration in the usual way. Note that we have a double pole at $y=s/i$ which is in the lower half-plane and thus we only get a contribution if $x>0$. In this way we find  the above is 
\[
\frac{\mathds{1}_{x>0} \cdot 2\pi i x}{2\pi i \log X }\int_{(c)}\frac{(X/e^x)^s}{s^2}ds
=
\mathds{1}_{0<x<\log X} \cdot 2\pi ix\frac{\log(X/e^x)}{\log X}. 
\]
\end{proof}

From the lemma we find that for any $x>0$
\begin{align*}
\int_{|y_j|\leqs \log T/\log_2 T} & \cos(x(y_1-y_2))f_X(y_1)\ol{f_X(y_2)}dy_1dy_2
\\
= & 
\frac{1}{2}\bigg|\int_{-\log T/\log_2 T}^{\log T/\log_2 T}e^{ixy}f_X(y)dy\bigg|^2
+
\frac{1}{2}\bigg|\int_{-\log T/\log_2 T}^{\log T/\log_2 T}e^{-ixy}f_X(y)dy\bigg|^2 
\\
= &
\frac{1}{2}\,\mathds{1}_{x<\log X} \cdot (2\pi x)^2\,\frac{\log^2(X/e^x)}{\log^2 X} 
+O\big(\mathds{1}_{x<\log X} \cdot x\tfrac{\log_2 T}{\log T}\big)
+O\Big(\big(\tfrac{\log_2 T}{\log T}\big)^2\Big).
\end{align*}
We apply these results to \eqref{S corr int 3}. First, we find that 
\begin{align*}
&
\frac{T}{2\pi^2}\int_{|y_j|\leqs \log T/\log_2 T}  \int_{{\log 2}}^{\log T} \frac{\cos(u(y_1-y_2))}{u}du
f_X(y_1)\ol{f_X(y_2)}dy_1dy_2
\\
= &
T\int_{{\log 2}}^{\min(\log T,\,\log X)} u\big(1-\tfrac{u}{\log X}\big)^2du
+
O(T\log_2 T).
\end{align*}
After a change of variables and a short calculation with the prime number theorem this is equal to
\[
T\sum_{p\leqs \min(T,\,X)}\frac{\log^2 p}{p}\Big(1-\tfrac{\log p}{\log X}\Big)^2
+
o(T(\log T)^2).
\]
For the integral involving $F(u)$ we have for $t=T$ or $2T$,
\begin{align*}
&
\frac{t}{2\pi^2}\int_{|y_j|\leqs \log T/\log_2 T}\int_1^\infty {F(u,t)\cos(u(y_1-y_2)\log t)}\frac{du}{u^2}f_X(y_1)\ol{f_X(y_2)}dy_1dy_2
\\
= &
\mathds{1}_{X\geqs t}\cdot {t\log^2 t}\int_1^{\tfrac{\log X}{\log t}}F(u,t)\big(1-\tfrac{u\log t}{\log X}\big)^2du+O(T\log_2 T)
\end{align*}
using \eqref{F int} in the error term. Finally, from the definition of $c(y_1,y_2)$ given in \eqref{c} we have 
\begin{multline*}
\frac{T}{2\pi^2} \int_{|y_j|\leqs \log T/\log_2 T} c(y_1,y_2)f_X(y_1)\ol{f_X(y_2)}dy_1dy_2
=
\\
=
{T}\sum_{\substack{p,m\geqs 2\\p^m\leqs X}}\frac{\log^2 p}{p^m}\frac{\log^2(X/p^m)}{\log^2 X} 
+O\big(T\tfrac{\log_2 T}{\log T}\sum_{\substack{p,m\geqs 2\\p^m\leqs X}}\frac{\log p}{mp^m}\big)+O(T)\ll T.
\end{multline*}
Combining the above three formulae gives Theorem \ref{log der thm}.


\end{document}